\newtheorem{thm}{Theorem}
\newtheorem{lem}[thm]{Lemma}
\newtheorem{prop}[thm]{Proposition}
\newtheorem{reduc}[thm]{Reduction}
\theoremstyle{definition}
\newtheorem{exe}[thm]{Example}
\newtheorem{rem}[thm]{Remark}
\newtheorem{remi}[thm]{Reminder}
\newcommand{\R}{\mathbf R}
\newcommand{\C}{\mathbf C}
\newcommand{\Aut}{\rm Aut}
\newcommand{\U}{\mathbf U}
\newcommand{\M}{\rm M_2(\C)}
\newcommand{\id}{\rm id}
\newcommand{\tr}{\rm trace}
\newcommand{\SU}{\rm SU}
\newcommand{\SO}{\rm SO}
\newcommand{\GL}{\rm GL}
\newcommand{\Ad}{\rm Ad}
\newcommand{\diag}{\rm diag}
\newcommand{\supp}{\rm supp}
\newcommand{\EXP}{\rm EXP}
\title[On groups of smooth maps into a simple compact Lie group]
{On groups of smooth maps into a simple compact Lie group, revisited}
\date{9 January 2023}
\author{Pierre de la Harpe}
\address{Pierre de la Harpe,
Section de math\'ematiques,
Universit\'e de Gen\`eve,
\newline
Uni Dufour,
24 rue du G\'en\'eral Dufour,
Case postale 64,
CH--1211 Gen\`eve 4.
}
\email{Pierre.delaHarpe@unige.ch}
\subjclass[2000]{22E65}
\keywords{Group of smooth maps into compact Lie group, maximal subgroups}
\begin{document}

\begin{abstract}
Let $X$ be a closed smooth manifold,
$G$ be a simple connected compact real Lie group,
$M (G)$ be the group of all smooth maps from $X$ to $G$,
and $M_0 (G)$ be its connected component
for the $\mathcal C^\infty$-compact open topology.

It is shown that maximal normal subgroups of $M_0 (G)$
are precisely the inverse images of the centre $Z(G)$ of $G$
by the evaluation homomorphisms
$M_0 (G) \to G, \hskip.1cm \gamma \mapsto \gamma (a)$, for $a \in X$.
This in turn is a consequence of a result
on the group $\mathcal C^\infty_{n, G}$ of germs at the origin $O$ of $\R^n$
of smooth maps $\R^n \to G$:
this group has a unique maximal normal subgroup,
which is the inverse image of $Z(G)$ by the evaluation homomorphism
$\mathcal C^\infty_{n, G} \to G, \hskip.1cm \underline \gamma \mapsto \underline \gamma (O)$.
\par

This article provides corrections for part of an earlier article \cite{Harp--88}.
%
\iffalse,
in particular for an error pointed out long ago to the author
% more than 12 years ago
by Michael Murray and Danny Stevenson.
Because of gaps in other proofs of \cite{Harp--88},
% (and possibly in \cite[Proposition 3.4.2]{PrSe--86}),
the so-called Theorem~I of \cite{Harp--88} is now degraded
to a ``Conjectural Theorem'' about the abstract group automorphism group of $M_0 (G)$,
discussed in the introduction.
\fi
%
\end{abstract}

\maketitle

\section*{Introduction}

An earlier article \cite{Harp--88} was found to contain several mistakes.
A list of local corrections would have been confusing,
and we rather repeat with complete proofs
Propositions I and II below (II and III in \cite{Harp--88}), about maximal subgroups.
Theorem I of \cite{Harp--88}, about automorphisms,
is replaced here by a conjectural statement only.

\vskip.2cm

Let $X$ be a closed smooth manifold
and let $G$ be a simple connected compact real Lie group,
with Lie algebra $\mathfrak g$.
Let $M (G)$ be the group of all smooth maps from $X$ to $G$,
and let $M_0 (G)$ be its connected component
for the $\mathcal C^\infty$-compact open topology
(the topology of uniform convergence of the maps and all their partial derivatives of all orders).
% Pressley--Segal Page 27
Then $M_0 (G)$ is an important example of a well behaved
infinite dimensional Lie group; see \cite{Miln--84, PrSe--86, Neeb--06, KhWe--09}.
However, $M_0 (G)$ is viewed here as an abstract group.
\par

Proposition I provides the classification
of all maximal normal subgroups of $M_0 (G)$.
For $a \in X$, let $\varepsilon_a \, \colon M_0 (G) \to G$
denote the evaluation map $\gamma \mapsto \gamma (a)$,
and let
$$
N_a M_0 (G) = \varepsilon_a^{-1} (Z(G))
$$
be the inverse image by $\varepsilon_a$
of the centre $Z(G)$ of $G$.
Since $G / Z(G)$ is simple,
$N_a M_0 (G)$ is a maximal normal subgroup of $M_0 (G)$.
Conversely:

\vskip.2cm
\noindent
\textbf{Proposition~I.}
% I
\emph{Any proper normal subgroup of $M_0 (G)$
is contained in $N_a M_0 (G)$ for some $a \in X$.}
\vskip.2cm

One may think of Proposition~I as a global result,
of which the proof uses Proposition~II
which is a related local result.
Denote by $n$ the dimension of $X$,
by $\mathcal C^\infty_{n, G}$
the set of germs at the origin $O$ of $\R^n$
of smooth maps from $\R^n$ to $G$,
and by $\varepsilon \, \colon \mathcal C^\infty_{n, G} \to G$
the evaluation map $\underline \gamma \mapsto \underline \gamma (O)$.
Let
$$
N_O \mathcal C^\infty_{n, G} = \varepsilon^{-1} ( Z(G))
$$
be the inverse image of $Z(G)$
by $\varepsilon$.
Then $\mathcal C^\infty_{n, G}$ has a unique maximal normal subgroup;
in other words:

\vskip.2cm
\noindent
\textbf{Proposition~II.}
% II
\emph{Any proper normal subgroup of $\mathcal C^\infty_{n, G}$
is contained in $N_O \mathcal C^\infty_{n, G}$.}
\vskip.2cm

Everything here works equally well for maps and germs
which are of class $\mathcal C^k$ for some $k \ge 0$.
The proof of Proposition~II works also in the real analytic setting.
Though a result analogous to Proposition~I
for real analytic maps looks plausible,
it is not covered by our proof, which uses partitions of unity.
Also, I guess that Proposition~II holds for a simple connected Lie group
which is not necessarily compact.
\par

Let $M (\Aut (G))$ be the group of smooth maps from $X$
to the group $\Aut (G)$ of automorphisms of~$G$
(recall that any automorphism in $\Aut (G)$ is automatically continuous \cite{Cart--30},
indeed analytic).
Let $\mathcal D (X)$ be the group of smooth diffeomorphisms of $X$.
Consider the natural action of $\mathcal D (X)$ on $M (\Aut (G))$, defined by
$$
\varphi (\beta) \, = \, \beta \circ \varphi^{-1}
\hskip.5cm \text{for} \hskip.2cm
\varphi \in \mathcal D (X)
\hskip.2cm \text{and} \hskip.2cm
\beta \in M (\Aut (G)),
$$
and the associated semi-direct product
$$
M (\Aut (G)) \rtimes \mathcal D (X) ,
\hskip.5cm \text{with multiplication} \hskip.2cm
(\alpha, \varphi) (\beta, \psi) \, = \, (\alpha \varphi(\beta), \varphi \psi) .
$$
This acts on $M_0 (G)$ by automorphisms:
$$
(\alpha, \varphi) (\gamma) (x) \, = \, \alpha(x) \big( \gamma \left( \varphi^{-1}(x) \right) \big)
$$
for $\alpha \in M (\Aut (G))$, $\varphi \in \mathcal D (X)$,
$\gamma \in M_0 (G)$, and $x \in X$.
We believe that any automorphism of the abstract group $M_0 (G)$ is of this form,
hence in particular that any automorphism of $M_0 (G)$ is continuous
for the $\mathcal C^\infty$-compact open topology.
In other terms, we can formulate the following statement,
which is the alleged Theorem I in \cite{Harp--88}:

\vskip.2cm
\noindent
\textbf{Conjectural Theorem.}
% I
\emph{With the notation above, the group of all abstract group automorphisms of $M_0 (G)$
coincides with the group $M (\Aut (G)) \rtimes \mathcal D (X)$.}

\vskip.2cm

In Proposition 3.4.2 of \cite{PrSe--86},
Pressley and Segal claim that the group $M (\Aut (G)) \rtimes \mathcal D (X)$
is the group of all \emph{bicontinuous} automorphisms
of the topological group $M_0 (G)$.

\vskip.2cm

The article \cite{Harp--88} has clearly not been much read.
However, in November 2009, Michael Murray and Daniel Stevenson
pointed out a serious flaw in the so-called proof of Lemma~14 --- and
consequently also in the proof of Proposition~I.
(A problematic step is the sentence
``By Lemma~12 again one has $N_\mathcal W \subset \ker (\pi)$'';
this Lemma~14 of \cite{Harp--88} has been replaced by Lemma~\ref{Zeproblem} below.)
At the time, I was not smart enough (or stubborn enough) to write a correction.
I discussed the matter with Georges Skandalis,
% Marseille, 9 d\'ecembre 2009
who convinced me that repairing Lemma 14
(Lemma~\ref{Zeproblem} below) was feasible,
and I made a good resolution to work on this as soon as possible,
but this was delayed by several years.
\par

As I finally came back to the problem in 2022,
I found other gaps and defects in proofs of other lemmas.
(In particular, it is not appropriate to define as there a topology on a space of smooth germs.)
Short of an erratum restoring all claims and in particular Theorem I of \cite{Harp--88},
I decided to write up a complete proof of Propositions I and II.
Moreover, the so-called Theorem~I of \cite{Harp--88} is now degraded
to a ``Conjectural Theorem'' about the group of automorphims of the abstract group of $M_0 (G)$,
discussed in the introduction.
\par

The following concordance table indicates how lemmas below
correspond to lemmas in \cite{Harp--88}:
$$
\begin{aligned}
&
\begin{array}{c|cccccc}
\text{[Harp--88]}
& Lem~1 & Lem~2 & - & Lem~3 & Lem~4 & Lem~5
\\
\hline
Below
& Lem~1 & Lem~2 & Remark~3 & Lem~4 & Lem~5 & Lem~6
\\
\end{array}
\\
&
\begin{array}{c|cccccc}
\text{[Harp--88]}
& Prop~6 & - & - & Lem~7 & Lem~8 & Lem~9
\\
\hline
Below
& Prop~7 & Remind~8 & Reduct~9 & Lem~10 & Lem~11 & Lem~12
\\
\end{array}
\\
&
\begin{array}{c|cccccc}
\text{[Harp--88]}
& - & - & Lem~10 & Lem~11 & Lem~13 & Lem~12
\\
\hline
Below
& Remind~13 & Ex~14 & Lem~15 & Lem~16 & Lem~17 & Lem~18 
\\
\end{array}
\\
&
\begin{array}{c|cccc}
\text{[Harp--88]}
& --- & Lem~14 & Lem~15~\&~16
\\
\hline
Below
& Lem~19~\&~20~\&~21 & Lem~22 & -- 
\\
\end{array}
\end{aligned}
$$
(Lemmas 15 and 16 in \cite{Harp--88} are not correct
and should now be ignored).

\section{Proof of Proposition~II when $G = \SU (2)$}
% section 1
\label{sectionpourIISU(2)}

Note that the result of Section~\ref{sectionpourIISU(2)}
for this particular case $G = \SU (2)$
will be used in the proof of the result for the general case,
see the proof of Lemma~\ref{normalcontaincsts} in Section~\ref{sectionpourIIGnal}.
\par

We adopt the following notation.
The multiplicative group of complex numbers of modulus $1$ is denoted by $\U$.
The algebra of linear endomorphisms of $\C^2$
is the algebra $\M$ of 2-by-2 matrices with complex coefficients;
$x^*$ is the conjugate transpose of a matrix $x \in \M$,
and $\id_2$ is the unit matrix.
The space of orthogonal projections from $\C^2$ onto lines is the projective line
$$
{\rm P}^1_\C \, = \, \{ p \in \M \mid p^* = p = p^2 \hskip.2cm \text{and} \hskip.2cm \tr (p) = 1 \} .
$$
In this section, until Proposition~\ref{propIISU(2)} included,
we use $G$ for the simple connected compact Lie group
$$
\begin{aligned}
\SU (2)
\, &= \,
\{ g \in \M \mid g^*g = \id_2 \hskip.2cm \text{and} \hskip.2cm \det g = 1 \} 
\\
\, &= \,
\left\{ \begin{pmatrix} \phantom{-}\rho & \sigma \\ -\overline{\sigma} & \overline{\rho} \end{pmatrix}
\in \M
\hskip.1cm \bigg\vert \hskip.1cm
\rho, \sigma \in \C , \hskip.2cm \vert \rho \vert^2 + \vert \sigma \vert^2 = 1 \right\} .
\end{aligned}
$$
An element $g \in G$ is \textbf{regular} if $g \notin \{\id_2, -\id_2\}$;
we denote by $G_{\rm reg}$ the open subset of $G$ of regular elements.
\par

Any $g \in G_{\rm reg}$ has two eigenvalues $z_g, \overline {z_g} \in \U$;
the notation is such that ${\Im (z_g) > 0}$.
Set
$$
A := \mathopen] 0, \pi \mathclose[ .
$$
Let $s_g \in A$ be the number such that $z_g = \exp (i s_g)$.
Let $p_g \in {\rm P}^1_\C$ be the projection of $\C^2$ onto the eigenspace $\ker (z_g \id_2 - g)$
and $p'_G \in {\rm P}^1_\C$ the projection of $\C^2$ onto $\ker (\overline {z_g}\id_2 - g)$.
We have
$$
g \, = \, \exp (i s_g) p_g + \exp (-i s_g) p'_g
\hskip.5cm \text{for all} \hskip.2cm
g \in G_{\rm reg} .
$$
\par

The group $G$ acts by conjugation on both $G_{\rm reg}$ and ${\rm P}^1_\C$,
trivially on~$A$, and by the product action on ${\rm P}^1_\C \times A$.

\begin{lem}
% 1
\label{SU(2)reg=PxA}
The map
\hskip.2cm
$\psi_A \, \colon \left\{ \begin{aligned}
G_{\rm reg} &\to {\rm P}^1_\C \times A \\ g \hskip.2cm &\mapsto (p_g, s_g)
\end{aligned} \right.$
\hskip.2cm
is a smooth diffeomorphism,
and it is equivariant for the action of $G$.
\end{lem}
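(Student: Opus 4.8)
The plan is to prove that $\psi_A$ is a smooth diffeomorphism by exhibiting an explicit smooth inverse and checking that both maps are smooth, and then separately verifying $G$-equivariance directly from the definitions. Let me sketch the key steps.

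First I would write down the candidate inverse map $\Phi \colon {\rm P}^1_\C \times A \to G_{\rm reg}$ by
\[
\Phi(p,s) \, = \, \exp(is) p + \exp(-is)(\id_2 - p).
\]
This is motivated by the spectral decomposition $g = \exp(is_g)p_g + \exp(-is_g)p'_g$ stated just above the lemma, using that $p'_g = \id_2 - p_g$ since the two eigenprojections of an element of $\SU(2)$ are complementary (their ranges are orthogonal lines in $\C^2$ and sum to all of $\C^2$). I would check that $\Phi(p,s)$ indeed lands in $G_{\rm reg}$: it is unitary because $p$ and $\id_2 - p$ are orthogonal projections onto orthogonal subspaces so $\Phi(p,s)^*\Phi(p,s) = \id_2$; its determinant is $\exp(is)\exp(-is) = 1$; and it is regular because for $s \in A = \mathopen]0,\pi\mathclose[$ the two eigenvalues $\exp(\pm is)$ are distinct and neither equals $\pm 1$, so $\Phi(p,s) \notin \{\id_2, -\id_2\}$.

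Next I would verify that $\psi_A$ and $\Phi$ are mutually inverse. That $\Phi \circ \psi_A = \id$ is exactly the spectral decomposition already recorded. For $\psi_A \circ \Phi = \id$, I would observe that $\Phi(p,s)$ has eigenvalues $\exp(is), \exp(-is)$ with $\Im(\exp(is)) = \sin s > 0$ for $s \in A$, so by the normalisation convention $z_{\Phi(p,s)} = \exp(is)$, giving $s_{\Phi(p,s)} = s$; and the $\exp(is)$-eigenspace is precisely the range of $p$, so $p_{\Phi(p,s)} = p$. The smoothness of $\Phi$ is manifest since it is a polynomial expression in $p$ and the smooth functions $\exp(\pm is)$ of $s$. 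The main obstacle, and the only step requiring genuine care, is the smoothness of $\psi_A$ itself: the maps $g \mapsto s_g$ and $g \mapsto p_g$ must be shown smooth on $G_{\rm reg}$. For $s_g$ this follows because on $G_{\rm reg}$ the eigenvalues are simple, so $\tr(g) = 2\cos s_g$ together with $s_g \in \mathopen]0,\pi\mathclose[$ (on which $\cos$ is a diffeomorphism onto $\mathopen]-1,1\mathclose[$) expresses $s_g = \arccos(\tr(g)/2)$ smoothly. For $p_g$ I would use the holomorphic functional calculus or the explicit Lagrange interpolation formula
\[
p_g \, = \, \frac{g - \exp(-is_g)\id_2}{\exp(is_g) - \exp(-is_g)},
\]
valid since the eigenvalues are distinct on $G_{\rm reg}$; this is a smooth expression in $g$ and $s_g$, hence smooth.

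Finally, for $G$-equivariance I would compute directly: for $h \in G$ and $g \in G_{\rm reg}$, conjugation $g \mapsto hgh^{-1}$ preserves eigenvalues, so $s_{hgh^{-1}} = s_g$ (the action on $A$ is trivial), while it sends the $z_g$-eigenspace to its image under $h$, giving $p_{hgh^{-1}} = h p_g h^{-1}$. Thus $\psi_A(hgh^{-1}) = (h p_g h^{-1}, s_g)$, which is exactly the product action of $h$ on $(p_g, s_g) \in {\rm P}^1_\C \times A$, establishing equivariance.
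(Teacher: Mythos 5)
Your proof is correct, and on the step that genuinely requires work --- the smoothness of $g \mapsto (p_g, s_g)$ on $G_{\rm reg}$ --- it overlaps with the paper's: both use $s_g = \arccos\big(\tfrac{1}{2}\tr(g)\big)$, and the holomorphic functional calculus you mention as one option is exactly the paper's device. Two differences are worth recording. First, the paper's proof establishes only the smoothness and equivariance of $\psi_A$; bijectivity and the smoothness of the inverse are left implicit in the spectral decomposition $g = \exp(is_g)p_g + \exp(-is_g)p'_g$ displayed before the lemma. You instead write the inverse $\Phi(p,s) = \exp(is)p + \exp(-is)(\id_2 - p)$ explicitly, check that it lands in $G_{\rm reg}$, and verify both composites, which gives a more complete account of why $\psi_A$ is a diffeomorphism rather than merely a smooth equivariant map. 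Second, your Lagrange-interpolation formula $p_g = \big(g - \exp(-is_g)\id_2\big)/\big(\exp(is_g) - \exp(-is_g)\big)$ is a more elementary route to the smoothness of $p_g$ than the paper's contour integral $p_g = \frac{1}{2i\pi}\int_{\Gamma_r}(z-g)^{-1}\,dz$: once $s_g$ is known to be smooth it avoids both the continuity of eigenvalues and the functional calculus, at the mild cost that the smoothness of $p_g$ is derived from that of $s_g$ rather than obtained independently. Both routes are valid; the paper's contour-integral argument is the one that generalizes to situations where no smooth parametrization of the spectrum is available in advance, which is in the spirit of its later, more general Lemma 10.
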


\begin{proof}
Let $h \in G_{\rm reg}$.
Let $D_r$ be a closed disc of centre $\exp (is_h)$ and some radius~$r > 0$,
contained in the half-plane $\{ z \in \C \mid \Im z > 0 \}$,
and let $\Gamma_r$ be the circle $\partial D_r$, with the positive orientation.
Since eigenvalues depend continuously on matrices,
there exists a neighbourhood $\mathcal V$ of $h$ in $G_{\rm reg}$
such that $\exp (is_g)$ is in the interior of $D_r$ for all $g \in \mathcal V$.
For $g \in \mathcal V$, we have the explicit formula
$$
p_g = \frac{1}{2 i \pi} \int_{\Gamma_r} (z-g)^{-1} dz
$$
by holomorphic functional calculus.
It follows that $p_g$ depends smoothly on~$g$.
We have also
$$
s_g \, = \, \arccos \Big( \frac{1}{2} \tr (g) \Big) ,
\leqno{(*)}
$$
hence $s_g$ depends smoothly on~$g$.
\par

For the continuity of eigenvalues and for holomorphic functional calculus,
see for example
\cite[Chapter VII, \S~3, no.\ 19, Corollary 21]{DuSc--58},
% page 575/294.}
or \cite[Chap.\ 1, \S~4, no~11, Proposition 16]{BoTS},
or
\cite[5.2.3 \& 5.5]{Serr--10},
% page 88
or \cite{Texi--18}.
\par

The equivariance of $\psi_A$ is straightforward.
\end{proof}

The complex projective line can also be described as
$$
\begin{aligned}
{\rm P}^1_\C \, = \, &\left\{ \begin{pmatrix} a & b \\ \overline b & 1-a \end{pmatrix}
\hskip.2cm \Big\vert \hskip.2cm
a \in \R, \hskip.1cm b \in \C, \hskip.1cm 0 \le a \le 1, \hskip.1cm a^2 + \vert b \vert^2 = a
\right\} 
\\
\, = \, &\left\{
\begin{pmatrix}
\frac{1}{2} \left( 1 + \sqrt{1 - 4 r^2}\right) & r e^{i \varphi}
\\ r e^{-i \varphi} & \frac{1}{2} \left( 1 - \sqrt{1 - 4 r^2}\right)
\end{pmatrix}
\hskip.1cm \Bigg\vert \hskip.2cm
0 \le r \le \frac{1}{2} , \hskip.2cm \varphi \in \mathopen[ 0, 2\pi \mathclose[
\right\} \phantom{.}
\\
\, \cup \, &\left\{
\begin{pmatrix}
\frac{1}{2} \left( 1 - \sqrt{1 - 4 r^2}\right) & r e^{i \varphi}
\\ r e^{-i \varphi} & \frac{1}{2} \left( 1 + \sqrt{1 - 4 r^2}\right)
\end{pmatrix}
\hskip.1cm \Bigg\vert \hskip.2cm
\frac{1}{2} \ge r \ge 0 , \hskip.2cm \varphi \in \mathopen[ 0, 2\pi \mathclose[
\right\} .
\end{aligned}
$$
The second parametrization makes it clear that ${\rm P}^1_\C$ is diffeomorphic to the $2$-sphere,
shown as two hemispheres glued along the equator.
We define
$$
\begin{aligned}
\mathcal U 
\, &= \,
{\rm P}^1_\C \smallsetminus
\left\{ \begin{pmatrix} 0 & 0 \\ 0 &1 \end{pmatrix} \right\}
\\
\, &= \,
\left\{
\begin{pmatrix} a & b \\ \overline b & 1-a \end{pmatrix}
\hskip.2cm \Big\vert \hskip.2cm
a \in \R, \hskip.1cm b \in \C, \hskip.1cm 0 < a \le 1, \hskip.1cm a^2 + \vert b \vert^2 = a
\right\} ,
\end{aligned}
$$
which is an open neighbourhood of $\begin{pmatrix} 1 & 0 \\ 0 & 0 \end{pmatrix}$
in~${\rm P}^1_\C$.

\begin{lem}
% 2
\label{Uconjpolenord}
There exists a smooth map
\hskip.2cm
$ \left\{ \begin{aligned}
\mathcal U &\to G \\ u &\mapsto g_u
\end{aligned} \right.$
\hskip.2cm
such that $g_u \begin{pmatrix} 1 & 0 \\ 0 & 0 \end{pmatrix} g_u^{-1} = u$
for all $u \in \mathcal U$.
\end{lem}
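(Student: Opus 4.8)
The plan is to produce $g_u$ by an explicit formula rather than by an abstract existence argument. Using the first parametrization of ${\rm P}^1_\C$, an element $u \in \mathcal U$ is written $u = \left(\begin{smallmatrix} a & b \\ \overline b & 1-a \end{smallmatrix}\right)$ where $a, b$ are smooth coordinate functions on $\mathcal U$ subject to $a^2 + \vert b \vert^2 = a$, equivalently $\vert b \vert^2 = a(1-a)$, and with $0 < a \le 1$. First I would observe that, because $a > 0$ on $\mathcal U$, the vector $v_u := (\sqrt a, \overline b / \sqrt a) \in \C^2$ is well defined and has unit norm: indeed $\Vert v_u \Vert^2 = a + \vert b \vert^2 / a = a + (1-a) = 1$. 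A one-line computation (using $\vert b \vert^2 / a = 1-a$) then shows that $u$ is exactly the orthogonal projection onto the line $\C v_u$, that is, $u = v_u v_u^*$.

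Next I would define $g_u \in G$ to be the element of $\SU(2)$ whose first column is $v_u$. In the parametrization $g = \left(\begin{smallmatrix} \rho & \sigma \\ -\overline\sigma & \overline\rho \end{smallmatrix}\right)$ this forces $\rho = \sqrt a$ and $\sigma = -b/\sqrt a$, so that
\[
g_u \, = \, \begin{pmatrix} \sqrt a & -b/\sqrt a \\ \overline b / \sqrt a & \sqrt a \end{pmatrix} .
\]
That $g_u$ lies in $G$ is immediate from the defining relation $\vert \rho \vert^2 + \vert \sigma \vert^2 = a + \vert b \vert^2 / a = 1$. For the conjugation identity I would use that $g_u$ is unitary, so with $p_0 = \left(\begin{smallmatrix} 1 & 0 \\ 0 & 0 \end{smallmatrix}\right) = e_1 e_1^*$ one gets $g_u p_0 g_u^{-1} = g_u (e_1 e_1^*) g_u^* = (g_u e_1)(g_u e_1)^*$; since $g_u e_1$ is the first column $v_u$, this equals $v_u v_u^* = u$, as required. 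Smoothness of $u \mapsto g_u$ on $\mathcal U$ follows at once, because $a$ and $b$ depend smoothly on $u$ and $a \mapsto \sqrt a$ is smooth for $a > 0$.

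There is essentially no deep obstacle here: the entire content is that the construction degenerates precisely at the one removed point. The only thing worth pinning down is why $\mathcal U$ must omit $\left(\begin{smallmatrix} 0 & 0 \\ 0 & 1 \end{smallmatrix}\right)$; at that point $a = 0$, the first column of $u$ vanishes, $v_u$ is undefined, and the normalization $1/\sqrt a$ blows up, so no continuous choice of $g_u$ conjugating the north-pole projection to $u$ can survive. Conceptually this reflects that the orbit map $G \to {\rm P}^1_\C$, $g \mapsto g \left(\begin{smallmatrix} 1 & 0 \\ 0 & 0 \end{smallmatrix}\right) g^{-1}$, is a fibre bundle admitting a smooth section only over a proper contractible open subset; the explicit formula above is just such a section over the chart $\mathcal U$.
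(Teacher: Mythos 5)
Your proof is correct and is essentially identical to the paper's: the paper also sets $\rho = \sqrt{a}$, $\sigma = -b/\rho$ and takes $g_u = \begin{pmatrix} \rho & \sigma \\ -\overline{\sigma} & \rho \end{pmatrix}$, which is exactly your matrix. The only difference is that you spell out the verification (via $u = v_u v_u^*$ and $g_u p_0 g_u^{-1} = (g_u e_1)(g_u e_1)^*$) where the paper simply invokes ``a simple computation.''
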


\begin{proof}
Given a rank one projection
$u = \begin{pmatrix} a & b \\ \overline b & 1-a \end{pmatrix} \in \mathcal U$,
set $\rho = \sqrt{a}$, $\sigma = -b/\rho$, and define
$g_u = \begin{pmatrix} \phantom{-}\rho & \sigma \\ -\overline {\sigma} & \rho \end{pmatrix}
\in \SU (2)$.
Then
$g_u \begin{pmatrix} 1 & 0 \\ 0 & 0 \end{pmatrix} g_u^{-1} = u$,
by a simple computation.
\end{proof}

\begin{rem}
% 3
\label{obsanalytic}
The diffeomorphism of Lemma \ref{SU(2)reg=PxA}
and the map of Lemma~\ref{Uconjpolenord}
are not only smooth, they are indeed real analytic.
Similarly, the map of Lemma~\ref{Greg=G/TxA} below is real analytic.
\end{rem}

Let $\mathcal O$ be a neighbourhood of the origin in $\R^n$
and let $\gamma \, \colon \mathcal O \to G$
be a smooth map with values in $G_{\rm reg}$.
Denote by $\underline \gamma \in \mathcal C^\infty_{n, G}$ the germ defined by $\gamma$.
Then $s_{\gamma (x)}$ depends smoothly on $x$, by Lemma~\ref{SU(2)reg=PxA},
and defines at the origin a germ $\underline s_{\underline \gamma}$
of $A$-valued smooth map.
Similarly $p_{\gamma (x)}$ defines at the origin a germ $\underline p_{\underline \gamma}$
of ${\rm P}^1_\C$-valued smooth map.
We write simply $\underline s$ and $\underline p$
when the reference to $\underline \gamma$ is clear.

\begin{lem}
% 4
% 3
\label{lemconjgermsSU(2)}
Let $\underline \gamma$ and $\underline \delta$
be two germs in $\mathcal C^\infty_{n, G}$
such that $\underline \gamma (O), \underline \delta (O) \in G_{\rm reg}$.
Let $(\underline p, \underline s)$ and $(\underline q, \underline t)$
be the associated germs at $O$ of $({\rm P}^1_\C \times A)$-valued smooth maps.
\par

Then $\underline \gamma$ and $\underline \delta$
are conjugate in $\mathcal C^\infty_{n, G}$
if and only if $\underline s = \underline t$.
\end{lem}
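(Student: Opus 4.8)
The plan is to treat the two implications separately, the forward one being immediate and the reverse one carrying all the content. For the ``only if'' direction, suppose $\underline\delta = \underline\beta\,\underline\gamma\,\underline\beta^{-1}$ for some $\underline\beta \in \mathcal C^\infty_{n,G}$. At the level of representative maps one has $\delta(x) = \beta(x)\gamma(x)\beta(x)^{-1}$ for $x$ near $O$, so $\gamma(x)$ and $\delta(x)$ are conjugate in $G$ and hence share their eigenvalues; by the formula $(*)$ in the proof of Lemma~\ref{SU(2)reg=PxA} this gives $s_{\gamma(x)} = s_{\delta(x)}$, that is $\underline s = \underline t$.

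For the converse, the key idea is to use the $G$-equivariance of the diffeomorphism $\psi_A$ of Lemma~\ref{SU(2)reg=PxA} to reduce conjugacy of the germs $\underline\gamma, \underline\delta$ to conjugacy of their projection germs $\underline p, \underline q$, the angle germs already agreeing by hypothesis. Concretely, for a germ $\underline\beta$ of $G$-valued map, the equivariance of $\psi_A$ shows that $\underline\beta\,\underline\gamma\,\underline\beta^{-1} = \underline\delta$ holds if and only if $\underline\beta\,\underline p\,\underline\beta^{-1} = \underline q$ as germs of ${\rm P}^1_\C$-valued maps, since the two germs then have the same image under the bijection $\psi_A$ (the angle components being $\underline s = \underline t$). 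So it suffices to produce a germ $\underline\beta$ with $\beta(x)\,\underline p(x)\,\beta(x)^{-1} = \underline q(x)$ for $x$ near $O$.

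To build such a $\underline\beta$ I would first normalize by constant conjugations so that the section of Lemma~\ref{Uconjpolenord} applies. Since $\underline s(O) = \underline t(O)$, the regular elements $\gamma(O)$ and $\delta(O)$ have the same spectrum, hence are conjugate in $G$; replacing $\underline\gamma$ by $\beta_0\,\underline\gamma\,\beta_0^{-1}$ for a suitable constant $\beta_0 \in G$ we may assume $\gamma(O) = \delta(O)$, so that $\underline p(O) = \underline q(O)$. As $G$ acts transitively by conjugation on ${\rm P}^1_\C$, a further constant conjugation of both germs lets us assume this common value is the north pole $\begin{pmatrix} 1 & 0 \\ 0 & 0 \end{pmatrix}$, which lies in the chart $\mathcal U$. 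By continuity $\underline p(x), \underline q(x) \in \mathcal U$ for all $x$ near $O$, so the smooth map $u \mapsto g_u$ of Lemma~\ref{Uconjpolenord} is defined along both germs, and I can set $\beta(x) = g_{\underline q(x)}\, g_{\underline p(x)}^{-1}$. Using the identity $g_u^{-1}\,u\,g_u = \begin{pmatrix} 1 & 0 \\ 0 & 0 \end{pmatrix}$ twice gives $\beta(x)\,\underline p(x)\,\beta(x)^{-1} = \underline q(x)$, as required; undoing the constant conjugations then exhibits the desired element of $\mathcal C^\infty_{n,G}$ conjugating $\underline\gamma$ to $\underline\delta$.

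The main obstacle is precisely that Lemma~\ref{Uconjpolenord} provides only a local trivialization of the conjugation action over the chart $\mathcal U$ around the north pole, rather than a global smooth section ${\rm P}^1_\C \to G$, which cannot exist since $g \mapsto g \begin{pmatrix} 1 & 0 \\ 0 & 0 \end{pmatrix} g^{-1}$ is the (nontrivial) Hopf fibration $G \to {\rm P}^1_\C$. The reduction that resolves this --- matching the base values $\gamma(O), \delta(O)$ by a constant conjugation and then rotating their common eigenline to the north pole --- is exactly what passing to germs makes harmless, since only an arbitrarily small neighbourhood of $O$ matters and continuity then keeps both projection germs inside $\mathcal U$. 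It remains only to check the routine points that $\underline\beta$ so defined is a genuine smooth germ and that constant conjugations preserve the relations used, both of which are immediate.
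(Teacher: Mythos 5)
Your proof is correct, and its engine is the same as the paper's: the conjugating germ is manufactured from the local section $u \mapsto g_u$ of Lemma~\ref{Uconjpolenord} as $\beta(x) = g_{q(x)}\,g_{p(x)}^{-1}$, and the equivariance of $\psi_A$ (equivalently, the spectral decomposition $\gamma = e^{is}p + e^{-is}(\id_2 - p)$ together with $\underline s = \underline t$) converts conjugacy of the projection germs into conjugacy of $\underline\gamma$ and $\underline\delta$. Where you genuinely diverge is in how you arrange for both projection germs to live in the chart $\mathcal U$. The paper splits into two cases according to whether the ranges of $p(O)$ and $q(O)$ are orthogonal: in the non-orthogonal case it chooses coordinates putting $p(O)$ at the north pole, so that $q(O)$ (and, after shrinking, $q(x)$) stays in $\mathcal U$; in the orthogonal case it interpolates through a third germ $\underline\eta$ with the same angle germ and with projection at $O$ orthogonal to neither, then uses transitivity of conjugacy. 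You instead normalize first by constant conjugations: since $\gamma(O)$ and $\delta(O)$ are regular with the same eigenvalues they are conjugate in $G$, so after a constant conjugation one may assume $\gamma(O)=\delta(O)$, hence $p(O)=q(O)$, and a second constant conjugation moves this common projection to the north pole. This normalization makes the orthogonal case vanish, eliminating both the case analysis and the auxiliary germ, at the modest cost of the extra observation that elements of $\SU(2)$ with equal spectra are conjugate and that constant germs are legitimate conjugators in $\mathcal C^\infty_{n,G}$. The two proofs are otherwise the same, and yours is, if anything, slightly tidier.
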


\begin{proof}
Consider $\underline \zeta \in \mathcal C^\infty_{n, G}$.
We may choose representatives $\gamma, p, s, \delta, q, t, \zeta$ of
$\underline \gamma, \underline p, \underline s, \underline \delta, \underline q, \underline t,
\underline \zeta$
defined in a common neighbourhood $\mathcal O$ of the origin in $\R^n$,
and such that $\gamma(x), \delta(x) \in G_{\rm reg}$ for all $x \in \mathcal O$.
\par

Suppose first that
$\underline \zeta \hskip.1cm \underline \gamma \hskip.1cm \underline \zeta^{-1}
= \underline \delta$.
Upon replacing $\mathcal O$ by a smaller neighbourhood,
we may assume that $\zeta(x) \gamma(x) \zeta(x)^{-1} = \delta(x)$
for all $x \in \mathcal O$,
hence that $s(x) = t(x)$ for all $x \in \mathcal O$.
It follows that $\underline s = \underline t$.
\par

Suppose now that $\underline s = \underline t$.
We may again assume that $s(x) = t(x)$ for all $x \in \mathcal O$.
\par

Suppose first that, moreover, the range of $p(O)$ and the range of $q(O)$
are not orthogonal in $\C^2$.
% (i.e., not antipodal in the $2$-sphere).
In appropriate orthogonal coordinates:
$$
p(O) \, = \, \begin{pmatrix} 1 & 0 \\ 0 & 0 \end{pmatrix}
\hskip.3cm \text{and} \hskip.3cm
q(O) \, = \, \begin{pmatrix} c & d \\ \overline d & 1-c \end{pmatrix}
\hskip.3cm \text{with} \hskip.3cm
c > 0 , \hskip.1cm d \in \C , \hskip.1cm c^2 + \vert d \vert^2 = c .
$$
Upon shrinking $\mathcal O$ once more if necessary,
we may assume that $p$ and $q$ are of the form
$$
\begin{aligned}
p(x) \, &= \, \begin{pmatrix} a(x) & b(x) \\ \overline {b(x)} & 1 - a(x) \end{pmatrix}
\hskip.2cm \text{where} \hskip.2cm a(x) > 0
\hskip.2cm \text{for all} \hskip.2cm x \in \mathcal O ,
\hskip.2cm \text{with} \hskip.2cm a(O) = 1 ,
\\
q(x) \, &= \, \begin{pmatrix} c(x) & d(x) \\ \overline {d(x)} & 1-c(x) \end{pmatrix}
\hskip.2cm \text{where} \hskip.2cm c(x) > 0
\hskip.2cm \text{for all} \hskip.2cm x \in \mathcal O ,
\hskip.2cm \text{with} \hskip.2cm c(O) = c .
\end{aligned}
$$
By Lemma~\ref{Uconjpolenord}, there exists a smooth map $\zeta \, \colon \mathcal O \to G$
such that $\zeta(x) p(x) \zeta(x)^{-1} = q(x)$ for all $x \in \mathcal O$.
Since $s = t$, it follows that $\zeta \gamma \zeta^{-1} = \delta$, hence
$\underline \zeta \hskip.1cm \underline \gamma \hskip.1cm \underline \zeta^{-1} = \underline \delta$.
\par

Suppose now that the range of $p(O)$ and the range of $q(O)$
are orthogonal in ${\rm P}^1_\C$.
Define a third germ $\underline \eta \in \mathcal C^\infty_{n, G}$
with eigenvalue germ equal to $\underline s$ (hence also equal to $\underline t$),
and with $\underline \eta (O)$ orthogonal
neither to $\underline \gamma (O)$ nor to $\underline \delta (O)$.
The previous argument shows that $\underline \gamma$ and $\underline \delta$
are both conjugate to $\underline \eta$,
hence are conjugate one to the other.
\end{proof}

\begin{lem}
% 5
% 4
\label{lembigformula}
Let $\mathcal O$ be a neighbourhood of the origin in $\R^n$
and $s, t$ two smooth maps $\mathcal O \to A$ such that
$\sin (t(x) / 2) < \sin (s(x))$ for all $x \in \mathcal O$.
Define smooth maps $\zeta, \eta, \delta \, \colon \mathcal O \to G$ by
$$
\begin{aligned}
\zeta (x) \, &= \,
\begin{pmatrix}
\Big( 1 - \frac{ \sin^2 (t(x) / 2) }{ \sin^2 (s(x)) } \Big)^{1/2} & \frac{ \sin (t(x) / 2) }{ \sin (s(x)) } 
\\
- \hskip.1cm \frac{ \sin (t(x) / 2) }{ \sin (s(x)) } & \Big( 1 - \frac{ \sin^2 (t(x) / 2) }{ \sin^2 (s(x)) } \Big)^{1/2}
\end{pmatrix} 
\in \SO (2) \subset G ,
\\
\eta (x) \, &= \,
\begin{pmatrix} \exp (is(x)) & 0 \\ 0 & \exp (-is(x)) \end{pmatrix} \zeta (x)
\begin{pmatrix} \exp (-is(x)) & 0 \\ 0 & \exp (is(x)) \end{pmatrix} \zeta (x)^{-1} ,
\\
\delta (x) \, &= \,
\begin{pmatrix} \exp (it(x)) & 0 \\ 0 & \exp (-it(x)) \end{pmatrix} ,
\end{aligned}
$$
for all $x \in \mathcal O$.
\par

Then $\eta (x) \in G_{\rm reg}$ for all $x \in \mathcal O$,
and the germs of $\eta$ and $\delta$ are conjugate in $\mathcal C^\infty_{n, G}$.
\end{lem}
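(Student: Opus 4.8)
The plan is to reduce everything to a single trace computation, after which Lemma~\ref{lemconjgermsSU(2)} finishes the job. First I would record that the hypothesis $\sin(t(x)/2) < \sin(s(x))$ is exactly what makes the three maps well defined and smooth. Since $t(x)/2 \in \mathopen]0,\pi/2\mathclose[$ and $s(x) \in A$, both $\sin(t(x)/2)$ and $\sin(s(x))$ are positive, so $\beta(x) := \sin(t(x)/2)/\sin(s(x))$ lies in $\mathopen]0,1\mathclose[$; hence $1 - \beta(x)^2 > 0$, the square root defining the diagonal entries of $\zeta(x)$ is smooth and positive, and $\zeta(x) = \begin{pmatrix} \alpha & \beta \\ -\beta & \alpha \end{pmatrix}$ with $\alpha := (1-\beta^2)^{1/2}$ is a genuine element of $\SO(2) \subset G$. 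Consequently $\zeta$, and therefore $\eta$ and $\delta$, are smooth $G$-valued maps on $\mathcal O$.

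The key step is to compute $\tr(\eta(x))$. Writing $D = \diag(\exp(is),\exp(-is))$ and using $\zeta^{-1} = \begin{pmatrix} \alpha & -\beta \\ \beta & \alpha \end{pmatrix}$, I would first conjugate to obtain $D \zeta D^{-1} = \begin{pmatrix} \alpha & \exp(2is)\beta \\ -\exp(-2is)\beta & \alpha \end{pmatrix}$, then multiply by $\zeta^{-1}$ and read off the diagonal of $\eta = D\zeta D^{-1}\zeta^{-1}$, whose trace is $2\alpha^2 + 2\cos(2s)\beta^2$. Substituting $\alpha^2 = 1 - \beta^2$, then $\beta^2 = \sin^2(t/2)/\sin^2 s$ together with $\cos(2s) - 1 = -2\sin^2 s$, the $1/\sin^2 s$ factors cancel and the trace collapses to $2 - 4\sin^2(t/2) = 2\cos t$.

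With this in hand the conclusion is immediate. Since $t(x) \in \mathopen]0,\pi\mathclose[$ we have $\cos t(x) \in \mathopen]-1,1\mathclose[$, so $\tr(\eta(x)) \in \mathopen]-2,2\mathclose[$ and hence $\eta(x) \notin \{\id_2, -\id_2\}$, i.e. $\eta(x) \in G_{\rm reg}$. Moreover formula $(*)$ in the proof of Lemma~\ref{SU(2)reg=PxA} gives $s_{\eta(x)} = \arccos\big(\tfrac12 \tr \eta(x)\big) = \arccos(\cos t(x)) = t(x)$, while $\delta(x) = \diag(\exp(it),\exp(-it))$ manifestly lies in $G_{\rm reg}$ and has $s_{\delta(x)} = t(x)$ because $\sin t(x) > 0$. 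Thus the eigenvalue germs of $\eta$ and $\delta$ coincide, $\underline s_{\underline \eta} = \underline s_{\underline \delta}$, and Lemma~\ref{lemconjgermsSU(2)} yields that $\underline \eta$ and $\underline \delta$ are conjugate in $\mathcal C^\infty_{n,G}$.

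The only real obstacle is the bookkeeping in the trace computation; conceptually there is no difficulty, since the formula for $\zeta$ has been engineered precisely so that the commutator $D\zeta D^{-1}\zeta^{-1}$ has prescribed eigenvalues $\exp(\pm it)$. I would pay particular attention to the identity $\cos(2s)-1 = -2\sin^2 s$, as this is the step that absorbs the $\sin^2 s$ in the denominator of $\beta^2$ and turns a seemingly $s$-dependent trace into one depending on $t$ alone.
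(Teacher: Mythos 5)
Your proposal is correct and takes essentially the same approach as the paper: both hinge on the trace computation $\tr(\eta(x)) = 2\cos(t(x)) = \tr(\delta(x))$, deduce regularity from the trace lying in $\mathopen]-2,2\mathclose[$ together with formula $(*)$ of Lemma~\ref{SU(2)reg=PxA}, and conclude by equality of the $A$-valued germs and Lemma~\ref{lemconjgermsSU(2)}. The only difference is presentational: you organize the computation as conjugation $D\zeta D^{-1}$ followed by multiplication by $\zeta^{-1}$, and you make explicit the well-definedness of $\zeta$, which the paper leaves implicit.
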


\begin{proof}
Note that $s(x), t(x) \in A$ implies $\zeta(x), \delta(x) \in G_{\rm reg}$
for all $x \in \mathcal O$.
A straightforward computation shows that, for all $x \in \mathcal O$,
$$
\begin{aligned}
\tr (\eta (x)) \, &= \, 2 \hskip.1cm \frac{ \sin^2 (s(x)) - (1 - \cos (2s(x))) \sin^2 (t(x) / 2) }{ \sin^2 s(x) }
\\
&= \, 2 ( 1 - 2 \sin^2 ( t(x)/2 ) )\, 
\, = \, 2 \cos (t(x)) \, = \, \tr (\delta (x)) .
\end{aligned}
$$
A first consequence is that $\eta (x) \in G_{\rm reg}$,
because $2 \cos (t(x)) \ne \pm 2$.
(Remember Formula (*) in the proof of Lemma~\ref{SU(2)reg=PxA}.)
A second consequence is that the $A\text{-valued}$ germs at $O$
associated to $\eta$ and $\delta$ are equal.
It follows from Lemma~\ref{lemconjgermsSU(2)}
that the germs at $O$ of $\eta$ and $\delta$ are conjugate in $\mathcal C^\infty_{n, G}$.
\end{proof}

\begin{lem}
% 6
% 5
\label{NcontainsV}
Let $N$ be a normal subgroup of $\mathcal C^\infty_{n, G}$ containing a germ
$\underline \gamma$ such that $\underline \gamma (O ) \in G_{\rm reg}$.
\par

There exists a symmetric neighbourhood $\mathcal V$ of the identity in $G$
(depending on $\underline \gamma$) such that
every $\underline \delta \in \mathcal C^\infty_{n, G}$
with $\underline \delta (O) \in \mathcal V \cap G_{\rm reg}$
is in $N$.
\end{lem}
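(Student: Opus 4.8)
The plan is to exploit a commutator that is hidden inside Lemma~\ref{lembigformula}, feeding it with a diagonal representative of the conjugacy class of $\underline\gamma$. First I would write $\underline s$ for the angle germ of $\underline\gamma$ and fix a representative $s \colon \mathcal O \to A$; since $s(O) \in A$, after shrinking $\mathcal O$ I may assume $\sin(s(x)) \ge \delta_0$ on $\mathcal O$ for some constant $\delta_0 > 0$. The key preliminary step is to replace $\underline\gamma$ by the diagonal germ $\underline D$ represented by $D(x) = \diag(\exp(is(x)), \exp(-is(x)))$: this germ has angle germ $\underline s$ and is regular at $O$, so Lemma~\ref{lemconjgermsSU(2)} makes it conjugate to $\underline\gamma$, and normality of $N$ gives $\underline D \in N$.

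Next I would observe that the germ $\underline\eta$ furnished by Lemma~\ref{lembigformula} is exactly the commutator $\underline D\, \underline\zeta\, \underline D^{-1}\, \underline\zeta^{-1}$, where $\underline\zeta$ is the germ of the map $\zeta$ appearing there. Hence, for any smooth $t \colon \mathcal O \to A$ with $\sin(t(x)/2) < \sin(s(x))$, this $\underline\eta$ lies in $N$, because $\underline\zeta\, \underline D^{-1}\, \underline\zeta^{-1}$ is a conjugate of $\underline D^{-1} \in N$; and by Lemmas~\ref{lembigformula} and~\ref{lemconjgermsSU(2)} it is conjugate to the diagonal germ $\underline\delta_t$ with angle germ $\underline t$, so $\underline\delta_t \in N$ too. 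Thus every diagonal germ of sufficiently small angle is captured by $N$.

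To finish I would set $\mathcal V = \{ g \in G \mid \tr(g) > 2\cos\theta_0 \}$, with $\theta_0 \in A$ chosen so small that $\sin(\theta_0/2) < \delta_0$; this is a neighbourhood of $\id_2$, and it is automatically symmetric because $\tr(g^{-1}) = \tr(g)$ on $\SU(2)$. For an arbitrary $\underline\delta$ with $\underline\delta(O) \in \mathcal V \cap G_{\rm reg}$, its angle germ satisfies $t(O) = s_{\underline\delta(O)} < \theta_0$, so $\sin(t(O)/2) < \delta_0 \le \sin(s(O))$; by continuity the strict inequality $\sin(t(x)/2) < \sin(s(x))$ survives on a neighbourhood of $O$, giving $\underline\delta_t \in N$ by the previous step. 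Finally $\underline\delta$, having the same angle germ $\underline t$ and being regular at $O$, is conjugate to $\underline\delta_t$ by Lemma~\ref{lemconjgermsSU(2)}, so $\underline\delta \in N$.

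The main obstacle I anticipate is the uniformity: $\mathcal V$ must be fixed before $\underline\delta$ is chosen, which is why the bound $\delta_0$ on $\sin(s)$ is extracted at the very start and $\theta_0$ is made to depend on $\delta_0$ alone. The only genuine verification is that the strict inequality $\sin(t/2) < \sin(s)$ spreads from the origin to a whole neighbourhood, which is immediate from continuity once it holds strictly at $O$; the germ-level shrinking this requires costs nothing.
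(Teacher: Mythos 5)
Your proposal is correct and follows essentially the same route as the paper: reduce $\underline \gamma$ to a diagonal germ via Lemma~\ref{lemconjgermsSU(2)}, recognize the germ $\underline \eta$ of Lemma~\ref{lembigformula} as a commutator with the diagonal germ in $N$ (so that $\underline \eta \in N$ by normality), and then conjugate an arbitrary small-angle regular $\underline \delta$ to a diagonal germ. The only cosmetic difference is that your neighbourhood $\mathcal V$ is cut out by the trace inequality $\tr (g) > 2 \cos \theta_0$ rather than by the paper's condition $\sin (s_g / 2) < \sin (c)$ on regular elements, and these define the same kind of symmetric neighbourhood of the identity.
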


\noindent
(A neighbourhood $\mathcal V$ of $\id_2$ in $G$ is symmetric
if $g^{-1} \in \mathcal V$ for all $g \in \mathcal V$.)

\begin{proof}
Let $(\underline p, \underline s)$ be the $({\rm P}^1_\C \times A)$-valued germ at $O$
associated to $\underline \gamma$, as in Lemma~\ref{lemconjgermsSU(2)}.
Let $\gamma, p, s$ be representatives of $\underline \gamma, \underline p, \underline s$
defined in a neighbourhood $\mathcal O$ of the origin in $\R^n$.
By this Lemma~\ref{lemconjgermsSU(2)}, there is no loss of generality
if we assume that
$$
\gamma (x) \, = \, \begin{pmatrix} \exp (is(x)) & 0 \\ 0 & \exp (-is(x)) \end{pmatrix}
$$
and that $s(x)$ is bounded away from $\{0, \pi\}$ for all $x \in \mathcal O$,
more precisely that there exists $c > 0$
such that $c < s(x) < \pi - c$ for all $x \in \mathcal O$.
\par 

Let $\mathcal V^\bullet$ be the open subset of $G_{\rm reg}$ of elements $g$
such that $\sin (s_g / 2) < \sin (c)$,
where $s_g$ is defined by $\psi_A(g) = (p_g, s_g)$,
where $\psi_A$ is as in Lemma~\ref{SU(2)reg=PxA}.
Set $\mathcal V = \mathcal V^\bullet \cup \{ \id_2 \}$;
it is a symmetric open neighbourhood of $\id_2$ in $G$.
Let $\underline \delta \in \mathcal C^\infty_{n, G}$ 
be such that $\underline \delta (O) \in \mathcal V^\bullet$;
we have to show that $\underline \delta \in N$.
\par

Let $(\underline q, \underline t)$ be the $({\rm P}^1_\C \times A)$-valued germ
associated to $\underline \delta$.
Upon replacing $\mathcal O$ by a smaller neighbourhood of the origin in $\R^n$,
we can find representatives $\delta, q, t$ of $\underline \delta, \underline q, \underline t$
such that $\delta (x) \in \mathcal V^\bullet$, i.e., such that $0 < \sin (t (x) / 2) < \sin (x)$,
for all $x \in \mathcal O$.
By Lemma~\ref{lemconjgermsSU(2)} again,
it suffices to consider the element $\underline \delta$ defined by
$$
\delta (x) \, = \, \begin{pmatrix} \exp (it(x)) & 0 \\ 0 & \exp (-it(x)) \end{pmatrix}
$$
for all $x \in \mathcal O$.
\par

By Lemma~\ref{lembigformula},
there exists a smooth map $\zeta \, \colon \mathcal O \to G$ such that
% the eigenvalue maps associated to $\delta$ and $\gamma \zeta \gamma^{-1} \zeta^{-1}$
% are equal.
% By Lemma~\ref{lemconjgermsSU(2)} once more,
% this implies that
$\underline \delta$ and 
$\underline \gamma\hskip.1cm \underline \zeta \hskip.1cm
\underline \gamma^{-1} \hskip.1cm \underline \zeta^{-1}$
are conjugate in $\mathcal C^\infty_{n, G}$,
and therefore auch that $\underline \delta$
is in the normal subgroup generated by $\underline \gamma$.
\end{proof}

\begin{prop}
% 7
% 6
\label{propIISU(2)}
Proposition~II of the Introduction holds for $G = \SU (2)$:
\par

any proper normal subgroup of $\mathcal C^\infty_{n, G}$
is contained in $N_O \mathcal C^\infty_{n, G}$.
\end{prop}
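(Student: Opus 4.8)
The plan is to prove the contrapositive: if $N$ is a normal subgroup of $\mathcal C^\infty_{n, G}$ that is \emph{not} contained in $N_O \mathcal C^\infty_{n, G}$, then $N = \mathcal C^\infty_{n, G}$. Since $N_O \mathcal C^\infty_{n, G} = \varepsilon^{-1}(Z(G))$ and $Z(\SU (2)) = \{\id_2, -\id_2\} = G \smallsetminus G_{\rm reg}$, the hypothesis $N \not\subset N_O \mathcal C^\infty_{n, G}$ says precisely that $N$ contains some germ $\underline \gamma$ with $\underline \gamma (O) \in G_{\rm reg}$. I would feed this $\underline \gamma$ into Lemma~\ref{NcontainsV} to obtain a symmetric neighbourhood $\mathcal V$ of $\id_2$ in $G$ such that every germ $\underline \delta \in \mathcal C^\infty_{n, G}$ with $\underline \delta (O) \in \mathcal V \cap G_{\rm reg}$ already lies in $N$. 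All the analytic work is then behind us, and what remains is a purely group-theoretic spreading argument.

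First I would record that $N$ contains every constant germ. The constant germs form a subgroup of $\mathcal C^\infty_{n, G}$ canonically isomorphic to $G$, and those with value in $\mathcal V \cap G_{\rm reg}$ lie in $N$. Now $\mathcal V \cap G_{\rm reg}$ is a nonempty open subset of the connected group $G = \SU (2)$; the subgroup it generates is therefore open, hence also closed (its complement is a union of cosets), hence equal to all of $G$ by connectedness. Consequently $N$ contains all constant germs.

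Next I would reduce an arbitrary germ $\underline \delta$ to the case $\underline \delta (O) = \id_2$. Writing $h = \underline \delta (O)$ and denoting by $\underline h$ the constant germ with value $h$, we have $\underline h \in N$ by the previous step, so $\underline \delta \in N$ if and only if $\underline h^{-1} \underline \delta \in N$, and $\underline h^{-1} \underline \delta$ takes the value $\id_2$ at $O$. Finally, for a germ $\underline \delta$ with $\underline \delta (O) = \id_2$, I would choose any $g \in \mathcal V \cap G_{\rm reg}$ and form $\underline g\, \underline \delta$, where $\underline g$ is the constant germ $g$: its value at $O$ is $g \in \mathcal V \cap G_{\rm reg}$, so $\underline g\, \underline \delta \in N$ by Lemma~\ref{NcontainsV}; since $\underline g \in N$ as well, $\underline \delta = \underline g^{-1} (\underline g\, \underline \delta) \in N$. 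Thus every germ lies in $N$, so $N = \mathcal C^\infty_{n, G}$, contradicting properness, and the proposition follows.

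The genuinely delicate point---passing from a single regular-valued germ $\underline \gamma$ to an entire neighbourhood's worth of germs lying in $N$---has already been dispatched in Lemma~\ref{NcontainsV}, via Lemmas~\ref{lemconjgermsSU(2)} and~\ref{lembigformula} and the commutator trick. The only new ingredient here is the observation that a nonempty open subset of the connected group $\SU (2)$ generates it; everything else is bookkeeping with constant germs. The one subtlety I would be careful about is the legitimacy of working with $\mathcal V \cap G_{\rm reg}$ rather than all of $\mathcal V$: this is exactly where removing the two central points $\{\id_2, -\id_2\}$ costs nothing, since an open set already suffices for the generation argument.
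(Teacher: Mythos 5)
Your proof is correct and takes essentially the same route as the paper: both extract a regular-valued germ from the hypothesis, invoke Lemma~\ref{NcontainsV}, use the fact that the nonempty open symmetric set $\mathcal V \cap G_{\rm reg}$ generates the connected group $G$, and then write an arbitrary germ as a product of germs whose values at $O$ lie in $\mathcal V \cap G_{\rm reg}$, all but one of them constant. The only difference is bookkeeping: the paper factors $\underline \zeta = \underline \delta \, \underline {\eta_{g_2}} \cdots \underline {\eta_{g_k}}$ with $\zeta(O) = g_1 \cdots g_k$ and $g_i \in \mathcal V \cap G_{\rm reg}$, whereas you first show that all constant germs lie in $N$ and then shift a general germ by a single regular constant.
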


\begin{proof}
Let $N$ be a normal subgroup of $\mathcal C^\infty_{n, G}$
which is not contained in $N_O \mathcal C^\infty_{n, G} = \varepsilon^{-1}(Z(G))$.
Then $\varepsilon (N) = G$,
because a normal subgroup of $G$ not contained in $Z(G)$ is $G$ itself
(see Reminder~\ref{RemiCartanvdW}).
In particular $N$ contains a germ $\underline \gamma$
such that $\underline \gamma (O)$ is regular.
Let $\underline \zeta \in \mathcal C^\infty_{n, G}$;
we have to show that $\underline \zeta \in N$.
\par

Let $\mathcal O$ be a neighbourhood of the origin in $\R^n$
and let $\gamma, \zeta \, \colon \mathcal O \to G$
be representatives of $\underline \gamma, \underline \zeta$.
By Lemma~\ref{NcontainsV}, there exists
a symmetric neighbourhood $\mathcal V$ of $\id_2$ in $G$
such that every $\underline \delta \in \mathcal C^\infty_{n, G}$
with $\underline \delta (O) \in \mathcal V \cap G_{\rm reg}$ is in $N$.
Since $G$ is connected, there exists an integer $k \ge 1$
and elements $g_1, g_2, \hdots g_k \in \mathcal V \cap G_{\rm reg}$
such that $\zeta(O) = g_1 g_2 \cdots g_k$.
For any $g \in G$, denote by $\eta_g \, \colon \mathcal O \to G$
the constant map of value $g$.
Set $\delta = \zeta (\eta_{g_2} \cdots \eta_{g_k})^{-1}$, so that
$\zeta = \delta \eta_{g_2} \cdots \eta_{g_k}$.
Then the $k$ matrices
$\delta(O) = g_1, \eta_{g_2}(O) = g_2, \hdots, \eta_{g_k}(O) = g_k$
are all in $\mathcal V \cap G_{\rm reg}$.
Lemma \ref{NcontainsV} implies that the germs 
$\underline \delta, \underline {\eta_{g_2}}, \hdots \underline {\eta_{g_k}}$
are all in $N$, so that their product $\underline \zeta$ is also in~$N$.
\end{proof}

\begin{remi}
% 8
% 6 1/3 Reminder
\label{RemiCartanvdW}
Let $G$ be a connected compact Lie group $G$
which is simple as a Lie group, i.e., such that its Lie algebra is simple.
The following results are due to Cartan and van der Waerden \cite{Cart--30, vdWa--33}.
\par

(i)
Any non-trivial normal subgroup of $G$ is contained in the centre $Z(G)$ of $G$,
and the quotient $G / Z(G)$ is simple as an abstract group.

(ii)
Any abstract group homomorphism with bounded image from $G$ to a Lie group is continuous,
and therefore smooth.
In particular any abstract group endomorphism $\phi$ of $G$ is necessarily smooth. 
% $G$ compact connexe simple, tout endo de groupe abstrait est continu,
% c'est dans Cartan, voir Borel page 136,
% c'est analytique, voir Hochschild VII.4.2 page 84..
Since the Lie algebra of $G$ is simple, the derivative of $\phi$ 
is either an isomorphism, in which case $\phi$ is an automorphism,
or zero, in which case $\phi(g) = 1_G$ for all $g \in G$.
\par

For the historical context of these articles by Cartan and van der Waerden,
see \cite[Chap.\ VI, \S~6]{Bore--01}.
% d\`es page 134.
For hints of proofs,
see \cite[Chapitre III, \S~4 exercices 8 \& 9, and \S~9 exercice 25]{BoLG2-3}.
% (see \cite{Cart--30}, and also \cite[Theorem A]{DiDo--19}),
Recall that any continuous homomorphism between Lie groups is analytic,
and in particular is smooth;
see for example \cite[Chapitre III, \S~8, no.\ 1]{BoLG2-3}.
\end{remi}

We end this section by a digression.
Germs which are regular can be diagonalized by Lemma~\ref{lemconjgermsSU(2)},
but the regularity condition cannot be removed.
\par

The following example illustrates this;
it is a minor adaptation of one in \cite[Chap.~1, \S~3]{Rell--69}.
Set $n = 1$. Define first a map $\theta$ from $\R$
to the space of $2$-by-$2$ matrices by
$$
\theta(x) \, = \,
\left\{ \begin{aligned}
\exp (-x^{-2}) \begin{pmatrix} \cos (2/x) & \phantom{-} \sin (2/x) \\ \sin (2/x) & -\cos (2/x) \end{pmatrix}
\hskip.2cm &\text{if} \hskip.2cm x \ne 0 ,
\\
\begin{pmatrix} 0 & 0 \\ 0 & 0 \end{pmatrix}
\hskip2cm &\text{if} \hskip.2cm x = 0 .
\end{aligned} \right.
$$
Define then a map $\gamma \, \colon \R \to \SU (2)$
by $\gamma (x) = \exp (i\theta(x))$ for all $x \in \R$.
Outside the origin, $\gamma(x)$ has eigenvectors
$\begin{pmatrix} \cos (1/x) \\ \sin (1/x) \end{pmatrix}$ and
$\begin{pmatrix} \phantom{-} \sin (1/x) \\ - \cos (1/x) \end{pmatrix}$
with eigenvalues
$\exp (i \exp (-x^{-2}) )$ and $\exp (-i \exp (-x^{-2}) )$.
But there is no germ $\underline \zeta$ such that
$\underline \zeta \hskip.1cm \underline \gamma \hskip.1cm \underline \zeta^{-1}$
is diagonal.
\par

Even though it is not relevant here, we mention that
one may diagonalize $\underline \gamma$ with a Borel map \cite{Azof--74}.

\section{Proof of Proposition~II in the general case}
% section 2
\label{sectionpourIIGnal}

Let $G$ be simple connected compact Lie group,
$\mathfrak g$ its Lie algebra,
$\exp \, \colon \mathfrak g \to G$ the exponential map,
and $\Ad \, \colon G \to \GL(\mathfrak g \otimes_\R \C)$
the adjoint representation of~$G$.

\begin{reduc}
% 9
% 6 2/3
\label{Reduction}
It is sufficient to prove Proposition~II when $G$ is simply connected.
\end{reduc}

\begin{proof}
Indeed, suppose that Proposition~II holds for the universal cover $\widetilde G$ of $G$
(which is still compact by Weyl's theorem \cite[\S~1, no~4]{BoLG9}).
The short exact sequence
$$
\{ 1 \} \, \to \, \pi_1(G) \, \to \, \widetilde G \, \to \, G \, \to \, \{ 1 \}
$$
induces a sequence
$$
\{ 1 \} \, \to \, \pi_1(G) \, \to \, \mathcal C^\infty_{n, \widetilde G}
\, \overset{p}{\to} \, \mathcal C^\infty_{n, G} \, \to \, \{ 1 \}
$$
which is again exact
(in these sequences, elements of $\pi_1(G)$
are viewed as elements of the centre of $\widetilde G$,
and also as germs of constant maps
from $\R^n$ to the centre of $\widetilde G$).
We denote here by $\varepsilon_G \, \colon \mathcal C^\infty_{n, G} \to G$ and
$\varepsilon_{\widetilde G} \, \colon \mathcal C^\infty_{n, \widetilde G} \to \widetilde G$
the evaluation maps at the origin of $\R^n$.
\par

Let $N$ be a normal subgroup of $\mathcal C^\infty_{n, G}$
which is not contained in $\varepsilon_G^{-1}(Z(G))$.
As $\widetilde G / Z(\widetilde G) = G / Z(G)$,
the normal subgroup $\widetilde N := p^{-1}(N)$ of $C^\infty_{n, \widetilde G}$
is not contained in $\varepsilon_{\widetilde G}^{-1}(Z(\widetilde G))$.
If Proposition~II holds for $\widetilde G$,
then $\widetilde N = \mathcal C^\infty_{n, \widetilde G}$,
hence $N = \mathcal C^\infty_{n, G}$;
thus Proposition~II holds for $G$.
\end{proof}

For $g \in G$, denote by $Z_G(g)_0$ the connected component
of the centralizer $Z_G(g) = \{ h \in G \mid hg = gh \}$ of $g$.
The element $g$ is \textbf{regular} if $Z_G(g)_0$ is a maximal torus in $G$,
equivalently if there exists a unique maximal torus in $G$ containing~$g$.
When $g$ is regular and $h \in G$ is not, $\dim Z_G(g)_0 < \dim Z_G(h)_0$
and there are infinitely many maximal tori in $G$ containing $h$.
We denote by $G_{\rm reg}$ the set of regular elements in $G$.
Conjugates of regular elements are regular,
so that $G$ acts on $G_{\rm reg}$ by conjugation.
The subset $G_{\rm reg}$ is symmetric, open, and dense in $G$.
More on regular elements in \cite[Chap.\ VII, \S~4]{BoLG7-8}
and \cite[\S~5, no~1]{BoLG9}.
We 
\vskip.2cm

\begin{center}
\emph{choose a maximal torus $T$ in $G$}
\end{center}

\vskip.2cm
\noindent
and we denote by $\mathfrak t$ its Lie algebra.
We choose an alcove $A$ in $\mathfrak t$,
namely a connected component of the subset of $\mathfrak t$
consisting of those $\xi \in \mathfrak t$ with $\exp (\xi)$ regular.
The group $G$ acts as usual on $G/T$, trivially on $A$,
and by conjugation on the subset $G_{reg}$ of regular elements.
Because of Reduction~\ref{Reduction}:

\vskip.2cm

\begin{center}
\emph{from now on in this section, we assume that $G$ is simply connected.}
\end{center}

\vskip.2cm

Lemmas~\ref{Greg=G/TxA}
% , \ref{deloctriv} = Lemma 11
and \ref{lemconjgermsGnal}
can be seen as generalizations of
Lemmas~\ref{SU(2)reg=PxA}
% , \ref{Uconjpolenord} = Lemma 2
and \ref{lemconjgermsSU(2)} respectively.

\begin{lem}
% 10
% 7
\label{Greg=G/TxA}
The map
\hskip.2cm
$\varphi_A \, \colon \left\{ \begin{aligned}
G/T \times A &\to \hskip.5cm G_{\rm reg} \\ (gT, s) \hskip.2cm &\mapsto g (\exp (s) ) g^{-1}
\end{aligned} \right.$
\hskip.2cm
is a smooth diffeomorphism,
and it is equivariant for the action of $G$.
\end{lem}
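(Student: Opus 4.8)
The plan is to check in turn that $\varphi_A$ is well defined and smooth, that it is a bijection, and that its differential is everywhere invertible; then a smooth bijection with invertible differential is a diffeomorphism. Well-definedness is immediate: for $s \in A \subset \mathfrak t$ one has $\exp(s) \in T$, and since $T$ is abelian $t \exp(s) t^{-1} = \exp(s)$ for $t \in T$, so $g \exp(s) g^{-1}$ depends only on the coset $gT$; the value is regular because $\exp(s)$ is regular (by the defining property of the alcove) and conjugates of regular elements are regular. Smoothness holds because $G \times A \to G_{\rm reg}$, $(g,s) \mapsto g \exp(s) g^{-1}$, is smooth and descends along the submersion $G \to G/T$. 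Finally the equivariance $\varphi_A\big(h \cdot (gT,s)\big) = h\, \varphi_A(gT,s)\, h^{-1}$ is read off from the formula.

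For surjectivity I would use that every $h \in G_{\rm reg}$ lies in a maximal torus, which is conjugate to $T$; writing $h = g t g^{-1}$ with $t \in T$ and using that $\exp \colon \mathfrak t \to T$ is onto, one gets $t = \exp(\xi)$, and after acting by the affine Weyl group (using both conjugation by representatives of $W = N_G(T)/T$ and the fact that $\exp$ is constant on $\ker(\exp|_{\mathfrak t})$-cosets) one may arrange $\xi \in A$, so $h$ is in the image. Injectivity is the substantive point. Suppose $g \exp(s) g^{-1} = g' \exp(s') g'^{-1}$ with $s,s' \in A$, and set $h = g'^{-1}g$, so $h \exp(s) h^{-1} = \exp(s')$. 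Since $\exp(s)$ is regular, $T$ is the unique maximal torus containing it; hence $hTh^{-1}$ is the unique maximal torus containing $\exp(s') \in T$, forcing $hTh^{-1}=T$, i.e. $h \in N_G(T)$ inducing some $w \in W$ with $w\cdot\exp(s)=\exp(s')$. Thus $w(s)$ and $s'$ differ by an element of $\ker(\exp|_{\mathfrak t})$, i.e. $s$ and $s'$ lie in a single orbit of the affine Weyl group $W_{\rm aff} = W \ltimes \ker(\exp|_{\mathfrak t})$.

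Here simple connectivity enters: for $G$ simply connected $\ker(\exp|_{\mathfrak t})$ is the coroot lattice and $W_{\rm aff}$ acts simply transitively on the set of alcoves (see \cite{BoLG9}), so the element carrying $s$ to $s'$ preserves $A$ and is therefore the identity. This gives $s = s'$ and $w(s)=s$; as $s$ is regular, $w = 1$, whence $h \in T$ and $gT = g'T$. To upgrade the bijection to a diffeomorphism, I would show $d\varphi_A$ is everywhere an isomorphism; by equivariance it suffices to treat points $(eT, s)$. Write $\mathfrak g = \mathfrak t \oplus \mathfrak m$ orthogonally, identifying $T_{eT}(G/T)$ with $\mathfrak m$ and $T_s A$ with $\mathfrak t$. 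Transporting $T_{\exp(s)}G$ back to $\mathfrak g$ by left translation by $\exp(s)^{-1}$, a direct computation (using $[s,v]=0$ for the $\mathfrak t$-direction and the curve $t\mapsto \exp(tX)\exp(s)\exp(-tX)$ for the $\mathfrak m$-direction) gives $(X,v) \mapsto v + (\Ad(\exp(-s)) - \id)(X)$. This is block diagonal, with the identity on $\mathfrak t$ and $\Ad(\exp(-s)) - \id$ on $\mathfrak m$; the latter is invertible precisely because $\exp(s)$ is regular, i.e. $\Ad(\exp(s))$ has no eigenvalue $1$ off $\mathfrak t$. As domain and target both have dimension $\dim G$, the smooth bijection $\varphi_A$ has invertible differential everywhere and is therefore a diffeomorphism.

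The hard part is the injectivity step, which rests on the structure theory of alcoves and, crucially, on the fact that for $G$ simply connected the affine Weyl group acts simply transitively on alcoves, so that $A$ is a genuine fundamental domain for regular conjugacy classes. The derivative computation, though the key analytic ingredient, is routine once the regularity criterion is expressed as invertibility of $\Ad(\exp(-s)) - \id$ on $\mathfrak m$.
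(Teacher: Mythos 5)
Your proof is correct, but it takes a genuinely different route from the paper, which contains no direct argument at all: the paper simply cites Bourbaki \cite[Proposition 4b, p.~51]{BoLG9}, notes that the finite group denoted $H_A$ there reduces to $\{1\}$ because $G$ is simply connected \cite[Remark 1, p.~45]{BoLG9}, and remarks that equivariance is clear. What you have done is reprove from scratch exactly the special case of that proposition which is needed here. The comparison is instructive because simple connectivity enters at the same structural point in both: your appeal to the simple transitivity of $W \ltimes \ker (\exp \vert_{\mathfrak t})$ on alcoves --- legitimate because for simply connected $G$ the lattice $\ker (\exp \vert_{\mathfrak t})$ is the coroot lattice, so that this group is the affine Weyl group --- is precisely the fact that makes Bourbaki's $H_A$ trivial; for a non-simply connected group your injectivity step is exactly what fails, and $\varphi_A$ is then only a covering map with deck transformation group $H_A$. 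Your other ingredients are sound: well-definedness and descent of smoothness along the submersion $G \to G/T$; surjectivity from conjugacy of maximal tori, surjectivity of $\exp \vert_{\mathfrak t}$, and transitivity on alcoves; injectivity from the uniqueness of the maximal torus through a regular element; and the derivative computation giving a block-diagonal map, the identity on $\mathfrak t$ and $\Ad (\exp(-s)) - \id$ on $\mathfrak m$, the second block being invertible exactly because $\exp (s)$ is regular. What your approach buys is self-containedness and visibility of where each hypothesis (compactness, regularity, simple connectivity) is used; what it costs is length, and the fact that several of your inputs (conjugacy of maximal tori, simple transitivity on alcoves, identification of $\ker (\exp \vert_{\mathfrak t})$ with the coroot lattice) are themselves citation-level facts from the same Bourbaki volume, so one reference has in effect been traded for several. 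For the purposes of this paper, where the lemma is a stepping stone towards Proposition~II, the citation is the economical choice; your argument would be the right one in a self-contained exposition.
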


\begin{proof}
The map $\varphi_A$ is a diffeomorphism,
see \cite[Proposition~4b of Page~51]{BoLG9};
the group denoted there by $H_A$ is here reduced to $\{ 1 \}$,
because $G$ is simply connected,
see \cite[Remark~1 of Page 45]{BoLG9}.
The equivariance is clear.
\end{proof}

We denote by $\pi \, \colon G \to G/T$ the canonical projection.

\begin{lem}
% 11
% 8
\label{deloctriv}
There exist an integer $k \ge 1$ and
\begin{enumerate}[label=(\roman*)]
\item[]
open subsets $\mathcal U_1, \mathcal U_2, \hdots, \mathcal U_k$ in $G/T$,
\item[]
smooth maps $\mu_j \, \colon \mathcal U_j \to G$ for $j = 1, 2, \hdots, k$,
\end{enumerate}
such that
\begin{enumerate}[label=(\roman*)]
\item[]
$\pi \big( \mu_j (u) \big) = u$ for each $u \in \mathcal U_j$, for $j = 1, 2, \hdots, k$.
\end{enumerate}
\end{lem}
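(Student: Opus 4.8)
The plan is to recognize Lemma~\ref{deloctriv} as the statement that the canonical projection $\pi \colon G \to G/T$, being a smooth fibration (indeed a principal $T$-bundle), admits smooth local sections, and that finitely many such sections suffice to cover the compact base $G/T$. The maps $\mu_j$ are precisely such local sections; they generalize the single section $u \mapsto g_u$ of Lemma~\ref{Uconjpolenord}, where $G/T$ was ${\rm P}^1_\C$. Note that, just as in the $\SU(2)$ case a single chart omitted one point of ${\rm P}^1_\C$, here one generally needs several charts, whence the integer $k$; I will arrange that the domains $\mathcal U_1, \dots, \mathcal U_k$ actually cover $G/T$, which is what is needed downstream.

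First I would establish the existence of a smooth local section through an arbitrary point $g_0 T \in G/T$. Since $G$ is compact, fix an $\Ad$-invariant inner product on $\mathfrak g$ and let $\mathfrak m = \mathfrak t^\perp$ be the orthogonal complement of $\mathfrak t$, so that $\mathfrak g = \mathfrak t \oplus \mathfrak m$. The composite $\mathfrak m \to G/T$, $\xi \mapsto \pi(\exp \xi)$, has differential at $0$ equal to the canonical isomorphism $\mathfrak m \overset{\sim}{\to} T_{eT}(G/T)$, hence it restricts to a diffeomorphism from a neighbourhood $W$ of $0$ in $\mathfrak m$ onto an open neighbourhood $\mathcal U_0$ of $eT$. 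Composing the inverse of this diffeomorphism with $\exp$ yields a smooth section $\mu_0 \colon \mathcal U_0 \to G$ with $\pi \circ \mu_0 = \id_{\mathcal U_0}$ and $\mu_0(eT) = e$. Left translation by $g_0$ then transports $\mu_0$ to a smooth section over the neighbourhood $g_0 \mathcal U_0$ of $g_0 T$. (Alternatively, one may simply invoke that $\pi$ is a surjective submersion and obtain local sections from the implicit function theorem.)

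Next I would invoke compactness. The sections just constructed have open domains covering $G/T$; since $G$ is compact, so is $G/T$, and finitely many of these domains, say $\mathcal U_1, \dots, \mathcal U_k$ with corresponding sections $\mu_1, \dots, \mu_k$, already cover $G/T$. By construction they satisfy $\pi(\mu_j(u)) = u$ for all $u \in \mathcal U_j$ and $j = 1, \dots, k$, which is exactly the assertion.

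I do not expect any genuine obstacle here: the content is the standard local triviality of the homogeneous-space fibration $G \to G/T$, together with compactness of $G/T$. The only point deserving care is the smoothness of the section, i.e.\ that $\xi \mapsto \pi(\exp \xi)$ is a local diffeomorphism of $\mathfrak m$ onto a neighbourhood of $eT$; this is immediate from the reductive splitting $\mathfrak g = \mathfrak t \oplus \mathfrak m$ and the computation of $d\pi_e$.
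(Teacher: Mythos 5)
Your proof is correct and takes essentially the same route as the paper, whose entire proof is the one-line observation that $\pi \colon G \to G/T$ is a locally trivial bundle with compact base $G/T$ and fibre $T$. You have simply filled in the standard details (local sections via the splitting $\mathfrak g = \mathfrak t \oplus \mathfrak m$, translation, and a finite subcover by compactness), including the covering property of the $\mathcal U_j$ that the later lemmas implicitly use.
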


\begin{proof}
This is a way to say that $\pi \, \colon G \to G/T$ is the projection
of a locally trivial bundle with compact base $G/T$ and fibre $T$.
\end{proof}

\begin{lem}
% 12
% 9
\label{lemconjgermsGnal}
Let $\underline \gamma$ and $\underline \delta$
be two germs in $\mathcal C^\infty_{n, G}$
such that $\underline \gamma (O), \underline \delta (O) \in G_{\rm reg}$.
Let $(\underline p, \underline s)$ and $(\underline q, \underline t)$
be the associated germs at $O$ of $(G/T \times A)$-valued smooth maps
obtained by composition of $\underline \gamma$ and $\underline \delta$
with the map $\varphi_A^{-1}$, where $\varphi_A$ is as in Lemma~\ref{Greg=G/TxA}.
\par

Then $\underline \gamma$ and $\underline \delta$
are conjugate in $\mathcal C^\infty_{n, G}$
if and only if $\underline s = \underline t$.
\end{lem}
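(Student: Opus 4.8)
The plan is to follow the template of the proof of Lemma~\ref{lemconjgermsSU(2)}, under the dictionary that replaces $\mathrm{P}^1_\C$ by $G/T$, the map $\psi_A$ by the equivariant diffeomorphism $\varphi_A$ of Lemma~\ref{Greg=G/TxA}, and the single conjugating chart of Lemma~\ref{Uconjpolenord} by the finitely many smooth local sections $\mu_j \colon \mathcal U_j \to G$ of Lemma~\ref{deloctriv}. I would first fix representatives $\gamma, \delta, p, q, s, t$ on a common neighbourhood $\mathcal O$ of the origin, with $\gamma(\mathcal O), \delta(\mathcal O) \subset G_{\rm reg}$, so that $\gamma(x) = \varphi_A(p(x), s(x))$ and $\delta(x) = \varphi_A(q(x), t(x))$. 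The one computation underlying everything is that $\varphi_A(gT, s) = g \exp(s) g^{-1}$ depends on the coset $gT$ only, since $\exp(s) \in T$ and $T$ is abelian; thus any smooth lift $g(\cdot)$ of the $G/T$-component recovers $\gamma$ as $g \exp(s) g^{-1}$.

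For necessity, suppose $\underline \zeta \, \underline \gamma \, \underline \zeta^{-1} = \underline \delta$. Shrinking $\mathcal O$, I may assume $\zeta(x) \gamma(x) \zeta(x)^{-1} = \delta(x)$ for all $x \in \mathcal O$. Applying $\varphi_A^{-1}$ and using that $\varphi_A$ is $G$-equivariant with the $G$-action trivial on $A$ (Lemma~\ref{Greg=G/TxA}), conjugation by $\zeta(x)$ leaves the $A$-component unchanged, so $s(x) = t(x)$ on $\mathcal O$ and hence $\underline s = \underline t$.

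For sufficiency, suppose $\underline s = \underline t$ and take representatives with $s = t$ on $\mathcal O$. Choose indices $j_1, j_2$ with $p(O) \in \mathcal U_{j_1}$ and $q(O) \in \mathcal U_{j_2}$ (the $\mathcal U_j$ cover $G/T$), and shrink $\mathcal O$ so that $p(\mathcal O) \subset \mathcal U_{j_1}$ and $q(\mathcal O) \subset \mathcal U_{j_2}$. Then $g := \mu_{j_1} \circ p$ and $h := \mu_{j_2} \circ q$ are smooth maps $\mathcal O \to G$ lifting $p$ and $q$, so that $\gamma(x) = g(x) \exp(s(x)) g(x)^{-1}$ and $\delta(x) = h(x) \exp(s(x)) h(x)^{-1}$. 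Setting $\zeta := h g^{-1}$, a one-line computation gives $\zeta(x) \gamma(x) \zeta(x)^{-1} = \delta(x)$, whence $\underline \zeta \, \underline \gamma \, \underline \zeta^{-1} = \underline \delta$.

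The step I expect to carry the weight is the construction of the smooth lifts $g$ and $h$, which is exactly what Lemma~\ref{deloctriv} supplies. It is worth noting that, unlike in the proof of Lemma~\ref{lemconjgermsSU(2)}, no separate treatment of an ``orthogonal'' configuration of $p(O)$ and $q(O)$ is required here: because the charts $\mathcal U_1, \dots, \mathcal U_k$ cover all of $G/T$, the basepoints $p(O)$ and $q(O)$ each lie in some chart and may be lifted independently, so the auxiliary intermediate germ used in the $\SU(2)$ argument becomes unnecessary.
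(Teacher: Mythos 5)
Your proof is correct, and for the non-trivial implication it takes a genuinely simpler route than the paper. The paper splits the sufficiency argument into two cases according to whether $\underline p (O)$ and $\underline q (O)$ lie in a \emph{common} chart $\mathcal U_j$ of Lemma~\ref{deloctriv}: in the first case it sets $\zeta(x) = \mu_j(q(x)) \big( \mu_j(p(x)) \big)^{-1}$ and invokes the equivariance of $\varphi_A$; in the second case it chooses a chain of points $q_0 = \underline p (O), q_1, \hdots, q_m = \underline q (O)$ in $G/T$ with consecutive terms in a common chart, introduces auxiliary germs $\gamma_i(x) = \varphi_A(q_i, s(x))$ with constant $G/T$-component, and conjugates step by step. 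Your key observation --- that $\varphi_A(gT, s) = g \exp(s) g^{-1}$ does not depend on the chosen lift of the coset (since $\exp(s) \in T$ and $T$ is abelian), so the conjugator $\zeta = (\mu_{j_2} \circ q)(\mu_{j_1} \circ p)^{-1}$ works even when the two lifts come from \emph{different} charts --- collapses both cases into one and renders the chain argument unnecessary; indeed the same computation shows the paper's case-one formula works verbatim with two different sections. The only thing the paper's route buys is a closer formal parallel with the $\SU(2)$ argument of Lemma~\ref{lemconjgermsSU(2)}, where a case distinction is genuinely forced because the single chart $\mathcal U$ of Lemma~\ref{Uconjpolenord} misses a point of ${\rm P}^1_\C$; here, where the charts of Lemma~\ref{deloctriv} cover all of $G/T$, your streamlined argument is sound and preferable. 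Your treatment of the necessity direction (equivariance preserves the $A$-component) matches what the paper dismisses as the trivial implication.
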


\begin{proof}
We prove the non-trivial implication only.
We assume that $\underline s = \underline t$
and we have to show that $\underline \gamma$ and $\underline \delta$ are conjugate.
We use the notation of Lemma~\ref{deloctriv}.
We distinguish two cases.
\par

In the first case, there exists $j \in \{1, \hdots, k\}$
such that $\underline p (O), \underline q (O)$ are in $\mathcal U_j$.
We may choose representatives
$\gamma, p, s, \delta, q, t$ of
$\underline \gamma, \underline p, \underline s, \underline \delta, \underline q, \underline t$
defined in a common neighbourhood $\mathcal O$ of the origin in $\R^n$,
such that $p(x), q(x) \in \mathcal U_j$
and $s(x) = t(x)$ for all $x \in \mathcal O$.
Define a smooth map $\zeta \, \colon \mathcal O \to G$ by
$\zeta(x) = \mu_j(q(x)) \big( \mu_j(p(x)) \big)^{-1}$;
then $\zeta(x) p(x) = q(x)$ for all $x \in \mathcal O$.
(Note that $\zeta(x)$ is an element of $G$ which acts on $G/T$,
and that $\zeta(x) p(x) = q(x)$ is an equality between elements of $G/T$.)
By Lemma~\ref{Greg=G/TxA} we have
$\zeta(x) \gamma(x) \zeta(x)^{-1} = \delta(x)$ for all $x \in \mathcal O$,
so that $\underline \gamma$ and $\underline \delta$ are conjugate.
\par

In the second case, there exist $j,j' \in \{1, \hdots, k\}$ with $j \ne j'$
such that $\underline p (O) \in \mathcal U_j$
and $\underline q (O) \in \mathcal U_{j'}$.
We may choose representatives
$\gamma, p, s, \delta, q, t$ of
$\underline \gamma, \underline p, \underline s,
\underline \delta, \underline q, \underline t$
defined in a common neighbourhood $\mathcal O$ of the origin in $\R^n$,
such that $p(x) \in \mathcal U_j$, $q(x) \in \mathcal U_{j'}$,
and $s(x) = t(x)$ for all $x \in \mathcal O$.
There exist an integer $m \ge 1$ and a sequence
$q_0 = \underline p (O), q_1, \hdots, q_{m-1}, q_m = \underline q (O)$
of elements of $G/T$ such that,
for each $i \in \{1, \hdots, m\}$,
there exists $j(i) \in \{1, \hdots, k\}$
with both $q_{i-1}$ and $q_i$ in $\mathcal U_{j(i)}$.
For $i \in \{1, \hdots, m-1 \}$, define a smooth map
$\gamma_i \, \colon \mathcal O \to G$ 
by $\gamma_i (x) = \varphi_A ( q_i, s(x))$ for all $x \in \mathcal O$;
write $\gamma_0$ for $\gamma$ and $\gamma_m$ for $\delta$.
By the argument for the first case,
there exists for $i \in \{1, \hdots, m \}$
a smooth map $\zeta_i \, \colon \mathcal O \to G$
such that $\zeta_i (x) q_{i-1} (x) = q_i (x)$, and therefore
$\zeta_i (x) \gamma_{i-1} (x) \zeta_i (x)^{-1} = \gamma_i (x)$,
for all $x \in \mathcal O$.
It follows that $\zeta_m \zeta_{m-1} \cdots \zeta_1$ conjugates
$\gamma_0 = \gamma$ to $\gamma_m = \delta$,
and therefore that $\underline \gamma$ is conjugate to~$\underline \delta$.
\end{proof}

As we have not been able to generalize Lemma~\ref{lembigformula},
we proceed in the next lemmas by reduction to the case of $\SU (2)$.
Before this, we recall the following facts
on the structure of simple connected compact Lie groups.

\begin{remi}
% 13
% 9 1/3
\label{RemiSsgpsSU(2)}
Let $X(T)$ denote the group of continuous homomorphisms
from the maximal torus $T$ to the group $\U$ of complex numbers of modulus $1$.
For $\alpha \in X(T)$, set
$$
\mathfrak g^\alpha_\C = \{ \xi \in \mathfrak g \otimes_\R \C \mid
\Ad (t) \xi = \alpha(t) \xi \hskip.2cm \text{for all} \hskip.2cm t \in T \} .
$$
The group $X(T)$ is written additively,
so that $\alpha = 0$ is the constant homomorphism $T \to \U$ of value $1$.
A root of $G$ with respect to $T$ is an element $\alpha \ne 0$ in $X(T)$
such that $\dim \mathfrak g^\alpha_\C > 0$.
If $\alpha$ is a root, it is known
that $\dim \mathfrak g^\alpha_\C = 1$ and that $-\alpha$ is also a root.
Let $R(G, T)$ denote the set of roots.
An element $t \in T$ is regular if and only if
$t$ is not contained in $\ker (\alpha)$ for all $\alpha \in R(G, T)$,
so that the alcove $A$ chosen above is a connected component of
$\mathfrak t \smallsetminus \bigcup_{\alpha \in R(G, T)}
\{ \xi \in \mathfrak t \mid \exp ( \xi ) \in \ker (\alpha) \}$.
Choose a basis
% $B$
of the set of roots $R(G, T)$
and let $R_+(G, T)$ be the corresponding set of positive roots.

\vskip.2cm

\emph{Let $\alpha$ be a root.
There exists a connected closed subgroup $S_\alpha$ of $G$
and a surjective morphism of Lie groups
$$
\nu_\alpha \, \colon \SU (2) \to S_\alpha
$$
such that
\begin{enumerate}[label=(\roman*)]
\item\label{iDESU(2)radiciel}
$S_\alpha$ and the kernel of $\alpha \, \colon T \to \U$ commute,
\item\label{iiDESU(2)radiciel}
$\nu_\alpha \begin{pmatrix} a & 0 \\ 0 & \overline a \end{pmatrix} \in T$
and $\alpha
\left( \nu_\alpha \begin{pmatrix} a & 0 \\ 0 & \overline a \end{pmatrix} \right)
= a^2$
for all $a \in \U$,
\item\label{iiiDESU(2)radiciel}
$T = \big( S_\alpha \cap T \big) \cdot \big( \ker (\alpha) \big)$,
\item\label{ivDESU(2)radiciel}
$\nu_\alpha$ is injective, i.e., $\nu_\alpha \, \colon \SU (2) \to S_\alpha$ is an isomorphism.
\end{enumerate}
The group $S_\alpha$ is the Lie subgroup of $G$ of Lie algebra
$$
\mathfrak{su} (2)_\alpha = \mathfrak g \cap
\left( \mathfrak g^\alpha_\C \oplus \mathfrak g^{-\alpha}_\C
\oplus [ \mathfrak g^\alpha_\C , \mathfrak g^{-\alpha}_\C ] \right) ,
$$
which is a subalgebra of $\mathfrak g$ isomorphic to $\mathfrak{su} (2)$.
}

\vskip.2cm

For the existence of $\nu_\alpha$, and~\ref{iDESU(2)radiciel} and~\ref{iiDESU(2)radiciel},
see \cite[\S~4, no~5, Page 31]{BoLG9}.
\par
For~\ref{iiiDESU(2)radiciel}, consider $t \in T$.
Choose a square root $a$ of $\alpha (t)$.
Set $t_1 = \nu_\alpha \begin{pmatrix} a & 0 \\ 0 & \overline a \end{pmatrix}$
and $t_2 = t_1^{-1}t$. Then $t = t_1t_2$ with $t_1 \in S_\alpha \cap T$ and $t_2 \in \ker (\alpha)$.
\par
For~\ref{ivDESU(2)radiciel},
see the Remarque in \cite[\S~4, no~5, Page 32]{BoLG9},
and recall that $G$ is simply connected.
(For $G = \SO(2n+1)$, which is not simply connected,
and for $\alpha$ a short root,
the image of $\nu_\alpha$ is isomorphic to $\SO (3) = \SU (2) / \{ \pm \id_2 \}$.)

\vskip.2cm

Let $\psi_\alpha$ denote the automorphism of $G$ of conjugation by
$\nu_\alpha
\begin{pmatrix} \phantom{-}2^{-1/2} & 2^{-1/2} \\ - 2^{-1/2} & 2^{-1/2} \end{pmatrix}$.
Then

\emph{
\begin{enumerate}[label=(\roman*)]
\addtocounter{enumi}{4}
\item\label{vDESU(2)radiciel}
$\psi_\alpha \left( \nu_\alpha \begin{pmatrix} a & 0 \\ 0 & \overline a \end{pmatrix} \right)
= 
\nu_\alpha \begin{pmatrix}
\frac{a + \overline a}{2} & \frac{-a + \overline a}{2} \\
\frac{-a + \overline a}{2} & \frac{a + \overline a}{2}
\end{pmatrix}$
for all $\nu_\alpha \begin{pmatrix} a & 0 \\ 0 & \overline a \end{pmatrix}
\in {\rm Im}(\nu_\alpha) \cap T$,
\item\label{viDESU(2)radiciel}
$\psi_\alpha (t) = t$ for all $t \in \ker (\alpha)$.
\item\label{viiDESU(2)radiciel}
$t \psi_\alpha(t)^{-1} \in \nu_\alpha \left( \SU (2)_{\rm reg} \right)$
for all $t \in T \cap G_{\rm reg}$.
\end{enumerate}
}

\vskip.2cm

Claim~\ref{vDESU(2)radiciel} follows from the computation
$$
\begin{pmatrix} \phantom{-}2^{-1/2} & 2^{-1/2} \\ - 2^{-1/2} & 2^{-1/2} \end{pmatrix}
\begin{pmatrix} a & 0 \\ 0 & \overline{a} \end{pmatrix}
\begin{pmatrix} 2^{-1/2} & -2^{-1/2} \\ 2^{-1/2} & \phantom{-}2^{-1/2} \end{pmatrix}
=
\begin{pmatrix}
\frac{ a + \overline{a} }{2} & \frac{ -a + \overline{a} }{2}
\\
\frac{ -a + \overline{a} }{2} & \frac{ a + \overline{a} }{2} 
\end{pmatrix}
$$
and Claim~\ref{viDESU(2)radiciel} follows from~\ref{iDESU(2)radiciel}.
(Compare~\ref{vDESU(2)radiciel} and~\ref{viDESU(2)radiciel}
with \cite[\S~4, no~5, Page 35]{BoLG9},
where the relevant inner automorphism of $G$ is that defined by
$\nu_\alpha \begin{pmatrix} \phantom{-}0 & 1 \\ -1 & 0 \end{pmatrix}$.)
\par

Let us show~~\ref{viiDESU(2)radiciel}.
Let $t \in T$. There exist by \ref{iiiDESU(2)radiciel} commuting elements
$$
t_1 = \nu_\alpha \begin{pmatrix} a & 0 \\ 0 & \overline a \end{pmatrix}
\in {\rm Im}(\nu_\alpha) \cap T
\hskip.5cm \text{and} \hskip.5cm
t_2 \in \ker (\alpha)
\hskip.5cm \text{such that} \hskip.5cm
t = t_1 t_2 .
$$
Then
$$
\begin{aligned}
t \psi_\alpha (t)^{-1}
&\hskip.2cm = \,
t_1 t_2
\nu_\alpha
\begin{pmatrix} \phantom{-}2^{-1/2} & 2^{-1/2} \\ - 2^{-1/2} & 2^{-1/2} \end{pmatrix}
t_2^{-1} t_1^{-1 }
\nu_\alpha
\begin{pmatrix} 2^{-1/2} & -2^{-1/2} \\ 2^{-1/2} & \phantom{-}2^{-1/2} \end{pmatrix}
\\
\, &\overset{\text{by \ref{iDESU(2)radiciel}}}{=} \,
t_1
\nu_\alpha
\begin{pmatrix} \phantom{-}2^{-1/2} & 2^{-1/2} \\ - 2^{-1/2} & 2^{-1/2} \end{pmatrix}
t_1^{-1 }
\nu_\alpha
\begin{pmatrix} 2^{-1/2} & -2^{-1/2} \\ 2^{-1/2} & \phantom{-}2^{-1/2} \end{pmatrix}
\\
&\hskip.2cm = \,
\nu_\alpha \Bigg( \frac{1}{2}
\begin{pmatrix} a & 0 \\ 0 & \overline a \end{pmatrix}
\begin{pmatrix} \phantom{-} 1 & 1 \\ -1 & 1 \end{pmatrix}
\begin{pmatrix} \overline a & 0 \\ 0 & a \end{pmatrix}
\begin{pmatrix} 1 & -1 \\ 1 & \phantom{-}1 \end{pmatrix}
\Bigg)
\\
&\hskip.2cm = \,
\nu_\alpha
\begin{pmatrix} \frac{1 + a^2}{2} & \frac{-1 + a^2}{2} \\
\frac{1 - \overline a^2}{2} & \frac{1 + \overline a^2}{2} \end{pmatrix} .
\end{aligned} .
$$
If $t \in T \cap G_{\rm reg}$, then $t \notin \ker (\alpha)$, hence $t_1 \notin \ker (\alpha)$,
hence $a^2 \ne 1$ by \ref{iiDESU(2)radiciel},
and therefore
$\begin{pmatrix} \frac{1 + a^2}{2} & \frac{-1 + a^2}{2} \\
\frac{1 - \overline a^2}{2} & \frac{1 + \overline a^2}{2} \end{pmatrix}
\in \SU (2)_{\rm reg}$.

\vskip.2cm

Finally, we have the following, used below in the proof of Lemma~\ref{perfection}.

\emph{
\begin{enumerate}[label=(\roman*)]
\addtocounter{enumi}{7}
\item\label{viiiDESU(2)radiciel}
There exist a neighbourhood $\mathcal V$ of $1_G$ in $G$
and for each $\alpha \in R_+(G, T)$
a smooth map $\lambda_\alpha \, \colon \mathcal V \to S_\alpha$ such that
$\prod_{\alpha \in R_+(G,T)} \nu_\alpha( \lambda_\alpha (g) ) = g$.
\end{enumerate}
}

\vskip.2cm

\noindent
\emph{Proof of~\ref{viiiDESU(2)radiciel}.}
We write $R_+$ for $R_+(G, T)$.
For $\alpha \in R_+$, recall that $\mathfrak{su} (2)_\alpha$ is the Lie algebra of $S_\alpha$. 
Consider the surjective linear map
$\bigoplus_{\alpha \in R_+} \mathfrak{su} (2)_\alpha \to \mathfrak g$
which applies $(\xi_\alpha)_{\alpha \in R_+}$ onto $\sum_{\alpha \in R_+} \xi_\alpha$.
Choose for each $\alpha \in R_+$ a linear map
$\kappa_\alpha \, \colon \mathfrak g \to \mathfrak{su} (2)_\alpha$
such that $\sum_{\alpha \in R_+} \kappa_\alpha (\xi) = \xi$
for all $\xi \in \mathfrak g$.
\par

Let $\mathcal V'$ be a neighbourhood of $1_G$ in $G$
and $\mathcal W$ be a neighbourhood of the origin $O$ in $\mathfrak g$
such that the exponential map is a diffeomorphism $\exp \, \colon \mathcal W \to \mathcal V'$;
denote by $\log$ the inverse diffeomorphism.
For each $\alpha \in R_+$, define a smooth map
$\lambda'_\alpha \, \colon \mathcal V' \to S_\alpha$
by $\lambda'_\alpha (g) = \exp \Big( \kappa_\alpha \big( \log (g) \big) \Big)$.
Define next a smooth map $\lambda' \, \colon \mathcal V' \to G$
by $\lambda'(g) = \prod_{\alpha \in R_+} \nu_\alpha( \lambda'_\alpha (g) )$;
the terms of this product are arranged according to some chosen order,
and the same order is used for the next product $\prod_{\alpha \in R_+} (\cdots)$ below.
Observe that $\lambda'(1_G) = 1_G$ and that the derivative of $\lambda'$ at $1_G$
is the identity map $\mathfrak g \to \mathfrak g$.
Upon replacing $\mathcal V'$ by a smaller neighbourhood of $1_G$,
one may therefore assume that $\lambda'$ is a diffeomorphism
from $\mathcal V'$ to some neighbourhood $\mathcal V$ of $1_G$ in $G$.
We denote by $\nu$ the inverse diffeomorphism.
For each $\alpha \in R_+$, define a smooth map
$\lambda_\alpha \, \colon \mathcal V \to S_\alpha$
by $\lambda_\alpha (g) = \lambda'_\alpha ( \nu(g) )$.
Then $\prod_{\lambda \in R_+} \nu_\alpha( \lambda_\alpha (g) )
= \prod_{\lambda \in R_+} \nu_\alpha( \lambda'_\alpha ( \nu(g) ) )
= \lambda' ( \nu(g) ) = g$
for all $g \in G$, and this concludes the proof.
\hfill $\square$
\end{remi}

%
\iffalse
%
\noindent Pour moi
\par
Posons $g = \exp (\xi) \sim 1 + \xi$, avec $\xi$ petit.
Alors $\lambda'_\alpha (g) = \exp ( \kappa_\alpha ( \xi)) \sim 1 + \kappa_\alpha (\xi)$
donc $\lambda' (g) \sim 1 + \sum_{\alpha} \kappa_\alpha (\xi) \sim 1 + \xi \sim g$,
donc la d\'eriv\'ee de $\lambda'$ en $1_G$ est bien $\id$.
%
\fi
%

\begin{exe}
% 14
% 9 2/3
Let $G = \SU (n)$, for $n \ge 3$,
and let $T$ be the torus of diagonal matrices in~$G$.
The open dense subset $T \cap G_{\rm reg}$ of $T$ consists of
matrices $\diag (z_1, z_2, \hdots, z_n)$
such that $z_1, z_2, \hdots, z_n$ are all distinct and $z_1 z_2 \cdots z_n = 1$.
Let $\alpha$ be the homomorphism $G \to \U$
mapping $\diag (z_1, z_2, \hdots, z_n)$ to $z_1z_2^{-1}$.
Then $\alpha$ is a root of $G$ with respect to $T$,
the image of $\nu_\alpha$ is
$\begin{pmatrix}
\SU(2) & 0 \\ 0 & \id_{n-2}
\end{pmatrix}$
and the kernel of $\alpha$ is
$\begin{pmatrix}
\pm \id_2 & 0 \\ 0 & \SU(n-2)
\end{pmatrix}$.
\end{exe}

\begin{lem}
% 15
% 10
\label{normalcontaincsts}
Let $N$ be a normal subgroup of $\mathcal C^\infty_{n, G}$
which contains a germ $\underline \gamma$
such that $\underline \gamma (O) \notin Z(G)$.
\par

Then $N$ contains anll constant germs.
\end{lem}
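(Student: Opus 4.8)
The plan is to reduce everything to the $\SU (2)$ case already settled in Proposition~\ref{propIISU(2)}, applied \emph{inside} a single root subgroup $S_\alpha$, using the structural facts of Reminder~\ref{RemiSsgpsSU(2)}. I first reformulate the goal. Write $\eta_g \in \mathcal C^\infty_{n, G}$ for the constant germ of value $g$, as in the proof of Proposition~\ref{propIISU(2)}. The constant germs form a subgroup $\{\eta_g \mid g \in G\}$ isomorphic to $G$, and $N \cap \{\eta_g \mid g \in G\}$ is normal in it, since conjugating a constant germ lying in $N$ by another constant germ keeps it both in $N$ (normality) and constant. By Reminder~\ref{RemiCartanvdW}(i) it therefore suffices to produce a \emph{single} constant germ $\eta_g \in N$ with $g \notin Z(G)$: such a germ forces $N \cap \{\eta_g \mid g \in G\}$ to be the whole of $\{\eta_g \mid g \in G\}$, i.e.\ $N$ contains every constant germ.

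Next I would manufacture inside $N$ a germ that is genuinely valued in one root $\SU (2)$. The starting observation is that $\varepsilon(N)$ is a normal subgroup of $G$, being the image of a normal subgroup under the surjective evaluation homomorphism $\varepsilon$, and it contains $\underline \gamma (O) \notin Z(G)$; hence $\varepsilon(N) = G$ by Reminder~\ref{RemiCartanvdW}(i). Consequently $N$ contains a germ $\underline \gamma'$ with any prescribed value at $O$, and I choose $\underline \gamma'(O) = t_0 \in T \cap G_{\rm reg}$. Because $t_0$ is regular, Lemma~\ref{Greg=G/TxA} and Lemma~\ref{lemconjgermsGnal} allow me to conjugate $\underline \gamma'$ within $\mathcal C^\infty_{n, G}$ (hence staying in $N$) to the $T$-valued germ $\underline \tau$ given by $\tau(x) = \exp (s(x))$, where $\underline s$ is the common $A$-part; thus $N$ contains a germ $\underline \tau$ taking \emph{all} its values in $T$, with $\underline \tau (O)$ regular.

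Now fix any root $\alpha$ (one exists since $G$ is simple, hence nonabelian) and let $w_\alpha \in S_\alpha$ be the element whose conjugation is the automorphism $\psi_\alpha$ of Reminder~\ref{RemiSsgpsSU(2)}. I consider the commutator germ $\underline \delta = \underline \tau \, \eta_{w_\alpha} \underline \tau^{-1} \eta_{w_\alpha}^{-1} \in N$, whose value at each $x$ is $\tau(x) \psi_\alpha(\tau(x))^{-1}$. Since $\tau(x) \in T$, the computation underlying property~\ref{viiDESU(2)radiciel} shows that $\underline \delta$ is valued in $S_\alpha$, and since $t_0$ is regular (so $\alpha(t_0) \ne 1$) its value at $O$ is regular in $S_\alpha$, in particular non-central there. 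Hence $N \cap \mathcal C^\infty_{n, S_\alpha}$ is a normal subgroup of $\mathcal C^\infty_{n, S_\alpha}$ containing a germ with value at $O$ outside $Z(S_\alpha)$. Transporting through the isomorphism $\nu_\alpha \colon \SU (2) \to S_\alpha$ (property~\ref{ivDESU(2)radiciel}, using that $G$ is simply connected) and invoking Proposition~\ref{propIISU(2)}, I conclude $N \cap \mathcal C^\infty_{n, S_\alpha} = \mathcal C^\infty_{n, S_\alpha}$. In particular $N$ contains $\eta_g$ for every $g \in S_\alpha$; taking $g$ regular in $S_\alpha$, so that $g \notin Z(S_\alpha) \supseteq Z(G) \cap S_\alpha$ gives $g \notin Z(G)$, yields the required non-central constant germ, and the first paragraph finishes the proof.

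The main obstacle is the middle step: converting the abstract datum $\underline \gamma$, about which one knows only that $\underline \gamma(O) \notin Z(G)$, into a germ of $N$ living entirely in a single root subgroup, so that the $\SU (2)$ result applies. The key point is that regularity of the value $t_0$ does double duty: it is exactly what permits the reduction to a $T$-valued germ through Lemma~\ref{lemconjgermsGnal}, and it is exactly the hypothesis ($\alpha(t_0) \ne 1$) that makes the commutator with $w_\alpha$ not merely land in $S_\alpha$ but be \emph{regular} there, via property~\ref{viiDESU(2)radiciel}. Both the smoothness of $\underline \delta$ and its membership in $S_\alpha$ are verified pointwise, so no delicate issue about topologies on spaces of germs intervenes.
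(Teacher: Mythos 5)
Your proof is correct and follows essentially the same route as the paper's: produce a germ in $N$ with regular value at $O$, conjugate it via Lemma~\ref{lemconjgermsGnal} to a $T$-valued germ, take its commutator with the constant germ implementing $\psi_\alpha$ so that property~\ref{viiDESU(2)radiciel} forces the result into $S_\alpha$ with regular value, apply Proposition~\ref{propIISU(2)} through $\nu_\alpha$, and conclude because the constant germs lying in $N$ form a normal subgroup of $G$ not contained in $Z(G)$. The only difference is your first step: where the paper invokes van der Waerden's explicit product of conjugated commutators $\prod_j a_j b_j g b_j^{-1} g^{-1} a_j^{-1}$ to manufacture a regular-valued germ inside $N$, you instead observe that $\varepsilon(N)$ is a normal subgroup of $G$ not contained in $Z(G)$, hence equals $G$ --- a harmless simplification, and in fact the very argument the paper itself uses at the corresponding step in the proof of Proposition~\ref{propIISU(2)}.
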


\begin{proof}
\emph{Step one.} We can assume that $\underline \gamma (O) \in G_{{\rm reg}}$.
\par

Indeed, set $g = \underline \gamma (O)$.
Denote by $k$ the dimension of $G$;
define
$$
\mathcal M (g) = \left\{ h \in G \mid
h = \prod_{j=1}^k a_j b_j g b_j^{-1} g^{-1} a_j^{-1}
\hskip.2cm \text{for some} \hskip.2cm
a_1, b_1, \hdots, a_k, b_k \in G \right\} .
$$
Since $g$ is not central, $\mathcal M (g)$ is a neighbourhood of~$1_G$ in $G$
(this is the essential point in van der Waerden's proof
of the simplicity of the abstract group $G / Z(G)$ \cite{vdWa--33}).
Choose $a_1, b_1, \hdots, a_k, b_k \in G$
such that $g' \Doteq \prod_{j=1}^k a_j b_j g b_j^{-1} g^{-1} a_j^{-1} \in G_{{\rm reg}}$.
Recall that $\eta_a$ denote the constant map $\R^n \to G$ of value $a$.
Define
$$
\underline \gamma' = \prod_{j=1}^k
\underline \eta_{a_j} \hskip.1cm
\underline \eta_{b_j} \hskip.1cm
\underline \gamma \hskip.1cm
\underline \eta_{b_j}^{-1} \hskip.1cm
\underline \gamma^{-1} \hskip.1cm
\underline \eta_{a_j}^{-1} .
$$
Then $\underline \gamma'$ is in $N$ and $g' = \underline \gamma'(O) \in G_{{\rm reg}}$.
So that we can indeed assume from the start that $\underline \gamma (O) \in G_{{\rm reg}}$.

\vskip.2cm

\emph{Step two.}
We can assume that $\gamma (x) \in T \cap G_{\rm reg}$ for all $x \in \mathcal O$.
\par

Denote by $(\underline p, \underline s)$
the germ at $O$ of $(G/T \times A)$-valued smooth maps
associated to $\underline \gamma$ as in Lemma~\ref{lemconjgermsGnal}.
Let $\gamma \, \colon \mathcal O \to G_{\rm reg}$ and $s \, \colon \mathcal O \to A$
be representatives of $\underline \gamma$ and $\underline s$.
By Lemma~\ref{lemconjgermsGnal},
the germ at $O$ of the map
$\gamma' \, \colon \mathcal O \to G_{\rm reg}, \hskip.1cm x \mapsto \exp ( s(x))$
is conjugate to $\underline \gamma$, hence is in~$N$.
Therefore, we may assume from now on
that $\underline \gamma$ is itself the germ of the map
$\gamma \, \colon \mathcal O \to G_{\rm reg}, \hskip.1cm x \mapsto \exp ( s(x))$,
in particular that $\gamma (x) \in T \cap G_{\rm reg}$ for all $x \in \mathcal O$.

\vskip.2cm

\emph{Step three.}
The group $N$ contains the germ of one constant map with value not in $Z(G)$.
\par

Let $\alpha$ be a root of $G$ with respect to $T$.
Let $\nu_\alpha \, \colon \SU (2) \to G$ and $\psi_\alpha \in \Aut (G)$
be as in Reminder~\ref{RemiSsgpsSU(2)}.
Let $I_\alpha \, \colon \mathcal C^\infty_{n, \SU (2)} \to \mathcal C^\infty_{n, G}$
be the map which applies
the germ at $O$ of a smooth map $\mathcal O \to \SU (2)$
to the germ at $O$ of the composition $\mathcal O \to \SU (2) \overset{\nu_\alpha}{\to} G$;
then $I_\alpha$ is an injective homomorphism of groups.
Define the map $\delta : \mathcal O \to G$ by
$\delta (x) = \gamma (x) \psi_\alpha ( \gamma (x))^{-1}$ for all $x \in \mathcal O$
and let $\underline \delta$ denote its germ at $O \in \mathcal O$.
Since $\delta$ is the commutator of $\gamma$
with something,
% with the constant map $\mathcal O \to G$ of value
% $\nu_\alpha
% \begin{pmatrix} \phantom{-}2^{-1/2} & 2^{-1/2} \\ - 2^{-1/2} & 2^{-1/2} \end{pmatrix}$
% the germ $\underline \delta$ is in $N$.
$\underline \delta$ is in $N$.
It follows from \ref{viiDESU(2)radiciel} in Reminder~\ref{RemiSsgpsSU(2)}
that $\delta(x) \in \nu_\alpha \left( \SU (2)_{\rm reg} \right)$
for all $x \in \mathcal O$
and that $\underline \delta$ is in the image of $I_\alpha$.
Therefore $I_\alpha^{-1} (N)$ is a normal subgroup of $\mathcal C^\infty_{n, \SU (2)}$
which contains the germ $I_\alpha^{-1} (\underline \delta)$,
and $I_\alpha^{-1} (\underline \delta) (O) \in \SU(2)_{\rm reg}$.
It follows from Proposition~\ref{propIISU(2)} that $N$ contains
all germs at $O$ of smooth maps $\mathcal O \to \nu_\alpha ( \SU(2) )$.
\par

In particular, $N$ contains the germ of a constant map 
$\mathcal O \to G$ with value in $\nu_\alpha ( \SU (2)_{\rm reg} )$,
in particular with value in $G$ and not in $Z(G)$.

\vskip.2cm

\emph{Final step.}
Since any normal subgroup of $G$ not inside $Z(G)$ is $G$ itself
(see Reminder~\ref{RemiCartanvdW}),
$N$ contains the germs of all constant maps $\mathcal O \to G$.
\end{proof}

The following lemma is the last step in the proof of Proposition~II.

\begin{lem}
% 16
% 11
\label{coupgraceII}
Let $N$ be a normal subgroup of $\mathcal C^\infty_{n, G}$
which contains a germ $\underline \gamma$
such that $\underline \gamma (O) \notin Z(G)$.
% as in Lemma~\ref{normalcontaincsts} = Lemma~15.
\par

Then $N = \mathcal C^\infty_{n, G}$.
\end{lem}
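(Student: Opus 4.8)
The plan is to show that the normal subgroup $N$ contains all germs, having already established in Lemma~\ref{normalcontaincsts} that $N$ contains every constant germ. The key structural input is claim~\ref{viiiDESU(2)radiciel} of Reminder~\ref{RemiSsgpsSU(2)}, which provides a neighbourhood $\mathcal V$ of $1_G$ and smooth maps $\lambda_\alpha \colon \mathcal V \to S_\alpha$ expressing any $g \in \mathcal V$ as an ordered product $\prod_{\alpha \in R_+} \nu_\alpha(\lambda_\alpha(g)) = g$. First I would reduce the problem to showing that $N$ contains every germ $\underline\zeta$ with $\underline\zeta(O) = 1_G$. Indeed, given an arbitrary germ $\underline\zeta$, its value $\underline\zeta(O)$ lies in $G$, and by Lemma~\ref{normalcontaincsts} the constant germ $\underline\eta_{\underline\zeta(O)}$ of that value is in $N$; multiplying $\underline\zeta$ by the inverse of this constant germ yields a germ taking the value $1_G$ at the origin, so it suffices to capture those.

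Next I would exploit the factorization over root subgroups. Given a germ $\underline\zeta$ with $\underline\zeta(O) = 1_G$, choose a representative $\zeta \colon \mathcal O \to G$; by continuity, after shrinking $\mathcal O$ we may assume $\zeta(x) \in \mathcal V$ for all $x \in \mathcal O$. Then claim~\ref{viiiDESU(2)radiciel} gives, for each positive root $\alpha$, a smooth map $x \mapsto \lambda_\alpha(\zeta(x)) \in S_\alpha \cong \nu_\alpha(\SU(2))$ with
$$
\zeta(x) = \prod_{\alpha \in R_+(G,T)} \nu_\alpha\big(\lambda_\alpha(\zeta(x))\big)
\hskip.5cm \text{for all} \hskip.2cm x \in \mathcal O .
$$
So $\underline\zeta$ is a finite product of germs, each of which lies in the image of the injective homomorphism $I_\alpha \colon \mathcal C^\infty_{n, \SU(2)} \to \mathcal C^\infty_{n, G}$ from Step three of Lemma~\ref{normalcontaincsts}. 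It remains to show each such factor is in $N$.

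The decisive step reuses the $\SU(2)$-result exactly as in the proof of Lemma~\ref{normalcontaincsts}. For a fixed root $\alpha$, the argument of Step three shows that $N$ contains \emph{all} germs at $O$ of smooth maps $\mathcal O \to \nu_\alpha(\SU(2))$; the mechanism there produced, from the single regular germ $\underline\gamma$, a germ $\underline\delta$ with $I_\alpha^{-1}(\underline\delta)(O) \in \SU(2)_{\rm reg}$, whence $I_\alpha^{-1}(N)$ is a normal subgroup of $\mathcal C^\infty_{n, \SU(2)}$ not contained in $N_O \mathcal C^\infty_{n, \SU(2)}$, so by Proposition~\ref{propIISU(2)} it equals all of $\mathcal C^\infty_{n, \SU(2)}$. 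Consequently each factor $\nu_\alpha(\lambda_\alpha(\zeta(\cdot)))$ defines a germ in $I_\alpha(\mathcal C^\infty_{n, \SU(2)}) \subseteq N$. Since $\underline\zeta$ is the product of these finitely many factors, $\underline\zeta \in N$, and hence $N = \mathcal C^\infty_{n, G}$.

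The main obstacle I anticipate is ensuring that Proposition~\ref{propIISU(2)} can be invoked \emph{for every} positive root $\alpha$, not merely for the one root used to generate a regular element in Lemma~\ref{normalcontaincsts}. The point is that the reasoning of Step three is genuinely uniform in $\alpha$: for any root, conjugating the chosen diagonalized regular germ $\underline\gamma$ by $\psi_\alpha$ and forming $\underline\delta(x) = \gamma(x)\psi_\alpha(\gamma(x))^{-1}$ lands inside $\nu_\alpha(\SU(2)_{\rm reg})$ by claim~\ref{viiDESU(2)radiciel}, giving a regular $\SU(2)$-germ in $I_\alpha^{-1}(N)$ for \emph{each} $\alpha$. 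Thus $N$ absorbs the full image of $I_\alpha$ for every $\alpha \in R_+(G,T)$ simultaneously, which is precisely what the product factorization requires.
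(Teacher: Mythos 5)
Your proof is correct, but it follows a genuinely different route from the paper's. The paper's own argument runs through the Weyl group: it picks a Coxeter element $c$ with representative $m \in {\rm Norm}_G(T)$, observes that $\Ad (m) - 1$ is invertible on $\mathfrak t$ (Coxeter elements do not have $1$ as an eigenvalue), and uses the implicit function theorem to write every $t$ in a neighbourhood $\mathcal V$ of the identity in $T$ as a commutator $t = m \chi(t) m^{-1} \chi(t)^{-1}$ with $\chi$ smooth; a germ whose value at $O$ lies in $\big( \bigcup_{g \in G} g \mathcal V g^{-1} \big) \cap G_{\rm reg}$ is then conjugated into $\mathcal V \cap G_{\rm reg}$ by Lemma~\ref{lemconjgermsGnal} and exhibited as a commutator of the constant germ $\underline \eta_m$ (which lies in $N$ by Lemma~\ref{normalcontaincsts}) with a smooth germ, and the general case follows from connectedness of $G$ by writing an arbitrary value at $O$ as a product of values in that symmetric open set. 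You instead avoid Coxeter elements entirely: you reduce to germs with value $1_G$ at $O$ by dividing by a constant germ, factor such germs over the root subgroups $S_\alpha$ via claim~\ref{viiiDESU(2)radiciel} of Reminder~\ref{RemiSsgpsSU(2)} (which the paper proves but exploits only in Lemma~\ref{perfection}), and absorb each factor into $N$ by the $I_\alpha$/Proposition~\ref{propIISU(2)} mechanism; your worry about uniformity in $\alpha$ is correctly resolved, since Step three of Lemma~\ref{normalcontaincsts} starts from an arbitrary root and its conclusion holds for each $\alpha \in R_+(G,T)$ simultaneously. Your proof is thus the germ-level analogue of Step five of the paper's proof of Lemma~\ref{perfection}: it trades the Bourbaki fact about Coxeter elements and the implicit function theorem on $T$ for a heavier, repeated use of the $\SU (2)$ case. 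One simplification is available to you: since Lemma~\ref{normalcontaincsts} already puts \emph{all} constant germs in $N$, the subgroup $I_\alpha^{-1}(N)$ contains constant germs with values in $\SU (2)_{\rm reg}$ for every positive root $\alpha$, so Proposition~\ref{propIISU(2)} applies at once and you need not rerun the $\psi_\alpha$-commutator construction at all.
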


\begin{proof}
\emph{Step one.}
Let $W$ be the Weyl group of $G$ with respect to $T$
(it is the quotient of the normalizer ${\rm Norm}_G (T)$ by $T$),
let $c \in W$ be a Coxeter element,
and let $m \in {\rm Norm}_G (T)$ be an element of class $c$.
Recall that $W$ acts on $T$ and $\mathfrak t$,
in particular $c$ acts on $T$ by $t \mapsto mtm^{-1}$
and on $\mathfrak t$ by $\xi \mapsto \Ad (m) \xi$.
Consider the smooth map
$$
f \, \colon T \to T , \hskip.5cm
t \mapsto c(t) t^{-1} = mtm^{-1}t^{-1} .
$$
The derivative of $f$ at the identity is
$$
L(f) \, \colon \mathfrak t \to \mathfrak t , \hskip.5cm
\xi \mapsto c( \xi )- \xi = \Ad (m) \xi - \xi .
$$
Now $L(f)$ is a linear automorphism, because Coxeter elements
do not have $1$ as an eigenvalue (see \cite[Page 33]{BoLG9},
referring itself to \cite[Chap.\ V, \S~6, no~2]{BoLG4-6}).
By the implicit function theorem,
there exist a symmetric neighbourhood~$\mathcal V$ of the identity in $T$
and a smooth map $\chi \, \colon \mathcal V \to T$ such that
$$
t = m \chi(t) m^{-1} \chi(t)^{-1}
\hskip.5cm \text{for all} \hskip.2cm
t \in \mathcal V .
\leqno{(\sharp)}
$$
Observe that $\bigcup_{g \in G} g \mathcal V g^{-1}$
is a neighbourhood of $1_G$ in $G$.

\vskip.2cm

\emph{Step two}.
Let $\underline \delta \in \mathcal C^\infty_{n, G}$ be such that
$\underline \delta (O) \in 
\big( \bigcup_{g \in G} g \mathcal V g^{-1} \big) \cap G_{\rm reg}$;
then $\underline \delta \in N$.
\par

Let $\mathcal O$ be a neighbourhood of the origin in $\R^n$
and let $\delta \, \colon \mathcal O \to
\big( \bigcup_{g \in G} g \mathcal V g^{-1} \big) \cap G_{\rm reg}$
be a smooth map of germ $\underline \delta$.
By Lemma~\ref{lemconjgermsGnal}, there exists a smooth map
$\zeta \, \colon \mathcal O \to \mathcal V \cap G_{\rm reg}$
of which the germ $\underline \zeta$
is conjugate to $\underline \delta$ in $\mathcal C^\infty_{n, G}$,
so that it suffices to show that $\underline \zeta \in N$.
Let $\eta_m$ be the constant map $\mathcal O \to G$ of value $m$;
by ($\sharp$), we have
$$
\zeta (x) = \eta_m (x) \chi(\zeta (x)) \eta_m (x)^{-1} \chi(\zeta (x))^{-1}
\hskip.5cm \text{for all} \hskip.2cm
x \in \mathcal \mathcal O .
$$
Since the germ of $\eta_m$ is in $N$ by Lemma~\ref{normalcontaincsts},
if follows that $\underline \zeta \in N$.

\vskip.2cm

\emph{Final step.}
Let now $\underline \delta$ be arbitrary in $\mathcal C^\infty_{n, G}$,
and let $\delta \, \colon \mathcal O \to G$ be a representative of $\underline \delta$.
Since $\big( \bigcup_{g \in G} g \mathcal V g^{-1} \big) \cap G_{\rm reg}$
is a nonempty symmetric open subset of $G$
and since $G$ is connected,
there exist $g_1, \hdots, g_k \in
\big( \bigcup_{g \in G} g \mathcal V g^{-1} \big) \cap G_{\rm reg}$
such that $\underline \delta (O) = g_1 \cdots g_k$.
For $j \in \{1, \hdots, k\}$, let
$\eta_j : \mathcal O \to \bigcup_{g \in G} g \mathcal V g^{-1}$
be the constant map of value $g_j$.
Set $\varphi = \delta (\eta_2 \cdots \eta_k)^{-1}$,
so that $\delta = \varphi \eta_2 \cdots \eta_k$.
Then $\varphi (O) = g_1, \eta_2 (O) = g_2, \hdots, \eta_k (O) = g_k$
are all in $( \bigcup_{g \in G} g \mathcal V g^{-1} \big) \cap G_{\rm reg}$,
hence $\underline \varphi, \underline \eta_2, \hdots, \underline \eta_k$
are all in $N$ by Step two.
It follows that
$\underline \delta = \underline \varphi \underline \eta_2 \cdots \underline \eta_k$
is in $N$.
\end{proof}

\section{Proof of Proposition~I}
% section 3
\label{sectionPropI}

Let $X$ be a closed smooth manifold, $n$ its dimension,
$G$ a simple connected compact real Lie group,
$1_G$ the identity in $G$,
and $M_0 (G)$ the connected component
of the group $M (G)$ of all smooth maps from $X$ to $G$,
as in Proposition~I.
\par

In the following lemma, $G$ is identified with the group of constant maps in $M_0 (G)$.
Given subsets $A, B$ in $M_0 (G)$, we write $(A, B)$ for the subgroup of $M_0 (G)$
generated by the commutators $\gamma \delta \gamma^{-1} \delta^{-1}$
with $\gamma \in A$ and $\delta \in B$.

\begin{lem}
% 17
% 13
\label{perfection}
We have $M_0 (G) = (G, M_0 (G)$. 
In particular, the group $M_0 (G)$ is perfect.
\end{lem}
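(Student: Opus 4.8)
The plan is to prove the stronger statement $M_0(G) = (G, M_0(G))$, from which perfection follows immediately since $(G, M_0(G)) \subseteq (M_0(G), M_0(G)) \subseteq M_0(G)$. The strategy is to exhibit every element of $M_0(G)$, or at least a generating set, as a product of commutators of the form $g \gamma g^{-1} \gamma^{-1}$ with $g \in G$ constant and $\gamma \in M_0(G)$. The natural approach is local-to-global: first produce maps whose values stay near $1_G$ as commutators with constant elements, then use connectedness of $M_0(G)$ together with the fact that a neighbourhood of the identity generates the group.

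First I would work locally near $1_G$ in $G$, imitating Step one of Lemma~\ref{coupgraceII}. Using the Coxeter element $m$ and the smooth local section $\chi$ satisfying $t = m\chi(t)m^{-1}\chi(t)^{-1}$ for $t$ in a neighbourhood $\mathcal V$ of the identity in $T$, one can write elements of $T$ close to $1_G$ as commutators of the constant map $\eta_m$ (value $m \in G$) with a smooth $T$-valued map. Since $\bigcup_{g \in G} g\mathcal V g^{-1}$ is a neighbourhood of $1_G$ in $G$, this shows that any smooth map $\delta \colon X \to G$ whose image lies in this conjugated neighbourhood is, up to the already-available conjugation machinery, a commutator with a constant element, hence lies in $(G, M_0(G))$. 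The decomposition \ref{viiiDESU(2)radiciel} of Reminder~\ref{RemiSsgpsSU(2)}, writing a near-identity $g$ as $\prod_{\alpha \in R_+} \nu_\alpha(\lambda_\alpha(g))$, is the tool that lets me reduce to the rank-one root subgroups $S_\alpha \cong \SU(2)$ and handle each factor as a commutator; this is precisely why that claim was flagged as used here.

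Having placed a whole neighbourhood of the identity section of $M_0(G)$ inside the normal-like subgroup $(G, M_0(G))$, the final step is a connectedness argument in the infinite-dimensional group. Given an arbitrary $\gamma \in M_0(G)$, I would use a path from the constant map $1_G$ to $\gamma$ inside $M_0(G)$ (which exists since $M_0(G)$ is the identity component) and subdivide it so that consecutive maps differ by a factor lying in the identity neighbourhood handled above; equivalently, write $\gamma$ as a finite product $\gamma = \varphi_1 \cdots \varphi_r$ with each $\varphi_i$ taking values in the good neighbourhood, exactly as in the Final step of Lemma~\ref{coupgraceII}. Each $\varphi_i$ lies in $(G, M_0(G))$, so $\gamma$ does too, giving $M_0(G) = (G, M_0(G))$.

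The main obstacle I expect is the passage from pointwise or local commutator expressions to genuine smooth global factorizations: the section $\chi$ and the maps $\lambda_\alpha$ are only defined on neighbourhoods, so I must ensure that the subdivision of $\gamma$ into near-identity factors can be carried out smoothly and globally over the compact manifold $X$. This is where partitions of unity and compactness of $X$ enter, and where care is needed that the conjugating maps $\zeta$ produced by Lemma~\ref{lemconjgermsGnal} glue into bona fide elements of $M_0(G)$ rather than mere germs. The germ-level results of Section~\ref{sectionpourIIGnal} do the hard representation-theoretic work, so the remaining difficulty is purely the smooth globalization, which the compactness of $X$ and connectedness of $G$ should resolve.
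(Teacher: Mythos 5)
Your overall skeleton (handle maps near the identity, then generate by connectedness) matches the paper's, and two of your three steps are sound: the Coxeter-section trick correctly exhibits maps with values in a neighbourhood $\mathcal V$ of $1_G$ in the \emph{fixed} torus $T$ as commutators with the constant $\eta_m$, and the final generation-by-connectedness step is exactly the paper's. The genuine gap is the middle step, where you pass from $T$-valued maps to arbitrary near-identity maps by noting that $\bigcup_{g \in G} g \mathcal V g^{-1}$ is a neighbourhood of $1_G$ and invoking ``the already-available conjugation machinery''. That machinery (Lemma~\ref{lemconjgermsGnal}) is a statement about \emph{germs}, and it does not globalize, for two separate reasons. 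First, it applies only to maps with regular values, whereas every neighbourhood of the identity in $M_0(G)$ contains maps taking the non-regular value $1_G$ (for instance all maps supported in a proper open subset of $X$); at such points the $G/T \times A$ coordinates of Lemma~\ref{Greg=G/TxA} are not even defined, and the germ-level remedy used in Lemma~\ref{coupgraceII} (multiply by constants so the value at $O$ becomes regular, then shrink the neighbourhood) has no analogue on a compact $X$. Second, even for a map all of whose values are regular there is a topological obstruction: take $X = S^2$, $G = \SU(2)$, identify $G/T \cong S^2$, and let $\delta(x) = g_x \, \mathrm{diag}(e^{is}, e^{-is}) \, g_x^{-1}$, where $g_x T = x$ and $s$ is small. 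Then $\delta \in M_0(G)$ (because $\pi_2(\SU(2)) = 0$), and $\delta$ takes regular values in $\bigcup_{g} g \mathcal V g^{-1}$, but a smooth $\zeta \in M(G)$ with $\zeta \delta \zeta^{-1}$ $T$-valued would yield a section of the fibration $\SU(2) \to \SU(2)/T$, i.e.\ of the Hopf bundle, which does not exist. So the step you flag as ``where care is needed'' is not a removable technicality; pointwise conjugacy into a torus simply fails globally.

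The paper avoids conjugation altogether and uses \emph{multiplication} instead, which is a pointwise operation and therefore globalizes for free. For $\SU(2)$ it chooses three fixed maximal tori $T_1, T_2, T_3$ such that the multiplication map $T_1 \times T_2 \times T_3 \to G$ is a local diffeomorphism at the identity; a near-identity map then factors smoothly and pointwise as $\gamma_1 \gamma_2 \gamma_3$ with $\gamma_j \in M_0(T_j)$, and each torus-valued factor is a commutator with a constant Weyl element (the explicit matrix identity in Step one --- your Coxeter trick is the same mechanism for a general $G$). For general $G$, Claim~\ref{viiiDESU(2)radiciel} of Reminder~\ref{RemiSsgpsSU(2)} plays the analogous role: it factors a near-identity map pointwise as a product of maps into the fixed subgroups $S_\alpha \cong \SU(2)$, reducing to the $\SU(2)$ case. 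Note that citing~\ref{viiiDESU(2)radiciel}, as you do, is not by itself sufficient: its output consists of $\SU(2)$-valued factors, not torus-valued ones, so one still needs the three-torus factorization inside $\SU(2)$. This product decomposition --- replacing conjugation by a local diffeomorphism built from multiplication of fixed subgroups --- is the idea missing from your proposal.
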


\emph{Note.}
The group $M (G)$ need not be perfect.
For example, when $X$ is the circle $S^1$ and $G$ is not simply connected,
there is a natural epimorphism from $M (G)$
to the fundamental group of $G$, which is abelian and not trivial,
so that $M (G)$ is not perfect.
\par

There
are cases where $M_0 (G) = M (G)$.
When $G$ is simply connected, this holds for $X$ the circle (obvious)
and for $X$ the $2$-sphere (because the second homotopy group of $G$
is trivial, by a theorem of Weyl).
When $G = \SU (2)$, this holds whenever
the third cohomotopy set of $X$ has one element,
in particular when $X$ is the circle or a closed surface.

\begin{proof}
We follow essentially the proof of Proposition~3.4.1 in \cite{PrSe--86}.
We consider first the case of $G = \SU (2)$.

\vskip.2cm

\emph{Step one.}
Let $T_1$ be the torus of diagonal matrices in $G$.
Let $\gamma \in M_0 (T_1)$. Then $\gamma \in (G, M_0 (G))$.
\par

Indeed, let $\mathcal U_1 = \left\{ \begin{pmatrix} e^{is} & 0 \\ 0 & e^{-is} \end{pmatrix} \in T_1
\hskip.2cm \Big\vert \hskip.2cm -\pi < s < \pi \right\}$.
Let
$$
\mathcal V_1 = \{ \gamma \in M (T_1) \mid \gamma (x) \in \mathcal U_1
\hskip.2cm \text{for all} \hskip.2cm x \in X \} ,
$$
which is a neighbourhood of the identity in $M_0(T_1)$.
Since there is a smooth retraction by deformation of $\mathcal U_1$ to $\{1_{T_1}\}$ in $T_1$,
any smooth map $\gamma \in \mathcal V_1$ is in $M_0 (T_1)$.
Suppose first that $\gamma \in \mathcal V_1$.
Then $\gamma$ is of the form
$x \mapsto \begin{pmatrix} e^{is(x)} & 0 \\ 0 & e^{-is(x)} \end{pmatrix}$
for some smooth map $s \, \colon X \to \mathopen] -\pi, \pi \mathclose[$.
Since
\small
$$
\begin{pmatrix} e^{is(x)} & 0 \\ 0 & e^{-is(x)} \end{pmatrix}
=
\begin{pmatrix} \phantom{-}0 & 1 \\ -1 & 0 \end{pmatrix}
\begin{pmatrix} e^{-is(x)/2} & 0 \\ 0 & e^{is(x)/2} \end{pmatrix}
\begin{pmatrix} 0 & -1 \\ 1 & \phantom{-}0 \end{pmatrix}
\begin{pmatrix} e^{is(x)/2} & 0 \\ 0 & e^{-is(x)/2} \end{pmatrix}
$$
\normalsize
for all $x \in X$, we have $\gamma \in (G, M_0 (G))$.
Let now $\gamma$ be arbitrary in $M_0 (T_1)$.
Since $M_0 (T_1)$ is connected,
there exist $\gamma_1, \hdots, \gamma_k \in \mathcal V_1$
such that $\gamma$ is the product $\gamma_1 \cdots \gamma_k$.
As each $\gamma_j$ is contained in $(G, M_0 (G))$ by the previous argument,
$\gamma \in (G, M_0 (G))$.

\vskip.2cm

\emph{Step two.}
Let
$$
\begin{aligned}
T_2 &= \left\{ \begin{pmatrix} \cos (t) & i \sin (t) \\ i \sin (t) & \cos (t) \end{pmatrix}
\hskip.2cm \Big\vert \hskip.2cm
t \in \mathopen[ 0, 2\pi \mathclose[ \right\} ,
\\
T_3 &= \left\{
\begin{pmatrix} \phantom{-} \cos (u) & \sin (u) \\ - \sin (u) & \cos (u) \end{pmatrix}
\hskip.2cm \Big\vert \hskip.2cm
u \in \mathopen[ 0, 2\pi \mathclose[
\right\} .
\end{aligned}
$$
Similarly, if $\gamma$ is in $M_0 (T_2)$ or in $M_0 (T_3)$,
then $\gamma \in (G, M_0(G))$.
\par

This follows from the Step one because maximal tori in $\SU (2)$ are conjugate.

\vskip.2cm

\emph{Step three.}
There is a neighbourhood of the identity $\mathcal W$ in $M_0 (G)$
such that every $\gamma \in \mathcal W$ is in $(G, M_0 (G))$.
\par

For $j \in \{1, 2, 3\}$, let $\mathfrak t_j$ denote the Lie algebra of $T_j$.
Consider the multiplication map
$$
\mu \, \colon
T_1 \times T_2 \times T_3 \to G, \hskip.2cm
(g_1, g_2, g_3) \mapsto g_1 g_2 g_3 .
$$
Its derivative at the identity
$$
\mathfrak t_1 \times \mathfrak t_2 \times \mathfrak t_3 \to \mathfrak g, \hskip.2cm
(\xi_1, \xi_2, \xi_3) \mapsto \xi_1 + \xi_2 + \xi_3
$$
is a linear isomorphism.
By the implicit function theorem,
there exist a neighbourhood $\mathcal V_T$ of the identity in $T_1 \times T_2 \times T_3$
and a neighbourhood $\mathcal V_G$ of the identity in $G$
such that the restriction of $\mu$ is a homeomorphism $\mathcal V_T \to \mathcal V_G$.
Upon restricting these neighbourhoods, we can assume moreover that
there exist smooth retractions by deformation
of $\mathcal V_T$ to $\{1_T\}$ in $T_1 \times T_2 \times T_3$
and of $\mathcal V_G$ to $\{1_G\}$ in $G$.
Define $\mathcal W = \{ \gamma \in M (G) \mid
\gamma(x) \in \mathcal V_G \hskip.2cm \text{for all} \hskip.2cm x \in X \}$;
any smooth map $\gamma \in \mathcal W$ is in $M_0 (G)$,
and $\mathcal W$ is a neighbourhood of the identity in $M_0 (G)$.
Then every $\gamma \in \mathcal W$ is a product $\gamma_1 \gamma_2 \gamma_3$
with $\gamma_j \in M_0(T_j)$ for $j \in \{1, 2, 3 \}$, hence $\gamma \in (G, M_0 (G))$
by the first two steps.

\vskip.2cm

\emph{Step four.}
Every $\gamma \in M_0 (G)$ is in $(G, M_0 (G))$.
\par

Since $M_0 (G)$ is connected, this follows from Step three by the usual argument.
(Compare with the end of the proof of Step one.)

\vskip.2cm

The part in which $G$ stands for $\SU (2)$ is now finished.
From now $G$ denotes again a general simple connected compact real Lie group,
as in the statement of Lemma~\ref{perfection}.

\vskip.2cm

\emph{Step five.}
Choose a maximal torus $T$ of $G$
and a basis
% $B$
of the set $R = R(G, T)$ of roots of $G$ with respect to $T$.
Let $R_+$ be the corresponding set of positive roots.
Let $\mathcal V$ be a neighbourhood of $1_G$ in $G$
and, for each $\alpha \in R_+$,
let $\lambda_\alpha \, \colon \mathcal V \to S_\alpha$
be a smooth map such that
$\prod_{\alpha \in R_+} \nu_\alpha \big( \lambda_{\alpha} (g) \big) = g$ for all $g \in \mathcal V$.
(See~\ref{viiiDESU(2)radiciel} in Reminder~\ref{RemiSsgpsSU(2)};
recall that $\nu_\alpha$ is the inclusion homomorphism of $S_\alpha \approx \SU (2)$ into~$G$.)
\par

Let $\gamma \in M_0 (G)$ be such that $\gamma(X) \subset \mathcal V$.
For each $\alpha \in R_+$, the map $\lambda_\alpha \circ \gamma \, \colon X \to S_\alpha$
is in the commutator subgroup $( S_\alpha , M_0 (S_\alpha) )$,
by Step four. It follows that
$\gamma = \prod_{\alpha \in R_+} \nu_\alpha \big( \mu_\alpha \circ \gamma \big)$
is in the commutator subgroup $( G, M_0 (G) )$.
\par

Let now $\gamma$ be arbitrary in $M_0 (G)$. Since $M_0 (G)$ is connected,
$\gamma$ is a product of elements with images in $\mathcal V$,
hence $\gamma \in ( G, M_0 (G) )$.
\end{proof}

The support of an element $\gamma$ of $M (G)$ is the closure $\supp (\gamma)$
of the subset of~$X$ of points $x$ such that $\gamma (x) \ne 1_G$.

\begin{lem}
% 18
% 12
\label{partunity}
Let $(\mathcal U_i)_{1 \le i \le k}$ be an open covering of $X$
and let $\gamma \in M_0 (G)$.
\par

There exist a finite sequence of smooth maps $(\gamma_j)_{ 1 \le j \le \ell }$ in $M_0 (G)$
and a sequence of indices $(i(j))_{ 1 \le j \le \ell }$ in $\{ 1, \hdots, k \}$
with $\supp (\gamma_j) \subset \mathcal U_{i(j)}$ for $j \in \{1, \hdots, \ell\}$
such that $\gamma = \gamma_1 \cdots \gamma_\ell$.
\end{lem}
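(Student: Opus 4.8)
The plan is to reduce the global map $\gamma$ to a product of maps each supported in a single chart of the covering, using a partition of unity to localize, and the local factorization established in Lemma~\ref{perfection} (more precisely the connectedness argument behind Step~three of its proof) to handle the small pieces. First I would choose a smooth partition of unity $(\rho_i)_{1 \le i \le k}$ subordinate to the covering $(\mathcal U_i)_{1 \le i \le k}$, so that $\supp(\rho_i) \subset \mathcal U_i$ and $\sum_{i=1}^k \rho_i = 1$ on $X$. The idea is to build $\gamma$ up gradually: introduce the partial sums $\sigma_p = \sum_{i=1}^p \rho_i$ (so $\sigma_0 = 0$ and $\sigma_k = 1$) and interpolate between $\gamma^{\sigma_{p-1}}$ and $\gamma^{\sigma_p}$. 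The difference between two such consecutive maps is supported where $\rho_p \ne 0$, hence in $\mathcal U_p$.

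The technical heart is to make sense of ``$\gamma$ raised to a real-valued function'' locally. Since $\gamma \in M_0(G)$ is homotopic to a constant, I would first use a homotopy to write $\gamma$ as a finite product $\gamma = \delta_1 \cdots \delta_r$ of maps each taking values in a fixed small neighbourhood $\mathcal V_G$ of $1_G$ on which $\log = \exp^{-1}$ is a diffeomorphism onto a neighbourhood of $O$ in $\mathfrak g$; this is exactly the kind of decomposition furnished by the connectedness of $M_0(G)$ used repeatedly above. For a factor $\delta$ with values in $\mathcal V_G$, set $\xi = \log \circ \, \delta \, \colon X \to \mathfrak g$, and for each index $i$ define
$$
\delta_i \, \colon X \to G, \hskip.5cm
\delta_i(x) = \exp\big( \rho_i(x)\, \xi(x) \big).
$$
Then $\supp(\delta_i) \subset \supp(\rho_i) \subset \mathcal U_i$, each $\delta_i$ lies in $M_0(G)$, and the telescoping product over $i$ recovers $\delta$ — but only if the $\exp(\rho_i \xi)$ multiply correctly, which they do not in general because $G$ is nonabelian. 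I would therefore order the factors and argue that, after passing to a neighbourhood small enough that $\exp$ is injective, the ordered product $\prod_{i=1}^k \exp(\rho_i \xi)$ differs from $\exp(\xi) = \delta$ by a further element of $M_0(G)$ whose values again lie in a small neighbourhood, allowing an induction on the ``size'' of the map to terminate.

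The main obstacle is precisely this noncommutativity: a partition of unity splits a Lie-algebra-valued function additively, but the group law is not additive, so $\prod_i \exp(\rho_i \xi) \neq \exp(\sum_i \rho_i \xi)$. The clean way around it, which I expect to be the decisive step, is to avoid splitting a single factor symmetrically and instead peel off one chart at a time: given $\gamma$ with values in $\mathcal V_G$, set $\gamma_1 = \exp(\rho_1 \log \gamma)$, which is supported in $\mathcal U_1$, and observe that $\gamma_1^{-1}\gamma$ again takes values in a (possibly enlarged but still controlled) neighbourhood of $1_G$ and, crucially, is now identically $1_G$ on the set where $\sum_{i \ge 2}\rho_i = 0$, i.e.\ its support is covered by $\mathcal U_2, \dots, \mathcal U_k$. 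Iterating this peeling $k$ times yields $\gamma = \gamma_1 \gamma_2 \cdots \gamma_k$ with $\supp(\gamma_j) \subset \mathcal U_{i(j)}$, and concatenating these decompositions over the finitely many factors $\delta_1, \dots, \delta_r$ coming from the connectedness of $M_0(G)$ produces the desired finite sequence $(\gamma_j)_{1 \le j \le \ell}$ with the stated support condition.
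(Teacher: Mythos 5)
Your overall skeleton is the same as the paper's: use connectedness of $M_0(G)$ to write $\gamma$ as a finite product of factors taking values in a neighbourhood of $1_G$ on which $\log$ is defined, then split each factor with a partition of unity subordinate to $(\mathcal U_i)$. But the step you single out as ``the main obstacle'' rests on a false claim, and the workaround you build around it is where the proof breaks. You assert that $\prod_i \exp(\rho_i \xi) \neq \exp\big(\sum_i \rho_i \xi\big)$ because $G$ is nonabelian. This is wrong: for each fixed $x$, the vectors $\rho_1(x)\xi(x), \hdots, \rho_k(x)\xi(x)$ are all scalar multiples of the \emph{single} vector $\xi(x) \in \mathfrak g$, hence lie in the abelian subalgebra $\R \, \xi(x)$ and commute pairwise; since $t \mapsto \exp(t\,\xi(x))$ is a one-parameter subgroup,
$$
\prod_{i=1}^k \exp \big( \rho_i(x) \xi(x) \big)
\, = \,
\exp \Big( \sum_{i=1}^k \rho_i(x) \xi(x) \Big)
\, = \,
\exp \big( \xi(x) \big) ,
$$
in any order of the factors. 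This pointwise commutativity is exactly the paper's key sentence (``Since the $\lambda_i \zeta_1$~'s commute with each other, we have $\EXP (\zeta_1) = \prod_i \EXP (\lambda_i \zeta_1)$''), and with it your first construction $\delta_i = \exp(\rho_i \xi)$ already finishes the proof in one line: $\supp(\delta_i) \subset \supp(\rho_i) \subset \mathcal U_i$ and $\delta = \delta_1 \cdots \delta_k$, so concatenating over the factors $\delta_1, \hdots, \delta_r$ gives the lemma.

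Because you deny this fact, you are forced into the ``peeling'' iteration, and that iteration, as you describe it, does not terminate. Peeling $\gamma_1 = \exp(\rho_1 \xi)$ leaves the remainder $\gamma_1^{-1}\gamma = \exp\big((1-\rho_1)\xi\big)$ (a computation which itself uses the commutativity you rejected --- without it you cannot identify the remainder at all); peeling next with $\rho_2$ leaves $\exp\big((1-\rho_2)(1-\rho_1)\xi\big)$, and after all $k$ charts the leftover is $\exp\big(\prod_{i=1}^k (1-\rho_i)\,\xi\big)$, which is nontrivial at every point where $\xi \neq 0$ and all $\rho_i < 1$ (for instance where $\rho_1 = \rho_2 = 1/2$ and the other $\rho_i$ vanish), and whose support is not contained in any single $\mathcal U_i$. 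To make the peeling close up you must peel with the normalized functions $\rho_p / (\rho_p + \cdots + \rho_k)$, equivalently with the partial sums $\sigma_p = \sum_{i \le p} \rho_i$, and then the telescoping identity $\prod_p \exp\big((\sigma_p - \sigma_{p-1})\xi\big) = \exp(\xi)$ is once more precisely the commutation relation --- so every correct version of your argument collapses back to the paper's. The fix is simply to delete the claimed obstacle and keep your first construction.
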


\begin{proof}
Let $\mathcal C^\infty(X, \mathfrak g)$ denote the space
of all smooth maps from $X$ to the Lie algebra $\mathfrak g$ of $G$.
Then $M_0 (G)$ is a Fr\'echet Lie group with Lie algebra $\mathcal C^\infty(X, \mathfrak g)$;
there is an exponential map
$$
\EXP \, \colon \mathcal C^\infty(X, \mathfrak g) \to M_0 (G)
$$
given by
$$
(\EXP (\zeta)) (x) = \exp ( \zeta(x) )
\hskip.5cm \text{for all} \hskip.2cm
\zeta \in \mathcal C^\infty(X, \mathfrak g)
\hskip.2cm \text{and} \hskip.2cm
x \in X ,
$$
where $\exp \, \colon \mathfrak g \to G$
is the exponential map of $G$,
and this $\EXP$ is a local diffeomorphism.
Since $M_0 (G)$ is connected,
there exist $\zeta_1, \hdots \zeta_m \in \mathcal C^\infty(X, \mathfrak g)$ such that
$$
\gamma = \EXP (\zeta_1) \cdots \EXP (\zeta_m) .
$$
Let $(\lambda_i)_{ 1 \le i \le k }$ be a smooth partition of unity
subordinated to $(\mathcal U_i)_{ 1 \le i \le k }$.
Since the $\lambda_i \zeta_1$~'s commute with each other, we have
$$
\EXP (\zeta_1) = \prod_{i = 1}^k \EXP (\lambda_i \zeta_1) 
$$
and $\supp (\lambda_i \zeta_1) \subset \mathcal U_i$ for $i \in \{1, \hdots, k\}$.
The same holds for each of $\EXP (\zeta_2)$, $\hdots$, $\EXP(\zeta_m)$,
and the lemma follows, with $N = k m$.
\end{proof}

For $a \in X$, recall that $\varepsilon_a$ denotes the evaluation map
$M_0 (G) \to G, \gamma \mapsto \gamma (a)$.
The kernel $\ker (\varepsilon_a)$ is a closed normal subgroup of $M_0 (G)$.
For an open subset $\mathcal U$ of $X$, define
$$
N_\mathcal U = \left\{ \gamma \in M_0 (G)
\mid
\supp (\gamma) \subset \mathcal U \right\} ,
$$
which is a normal subgroup of $M_0 (G)$.
For $a \in X$,
we write $N_{X \smallsetminus a}$ for $N_{X \smallsetminus \{a\}}$.

\begin{lem}
% 19
% 13 2/3
\label{connectivity+}
Let $\mathcal V$ be an open neighbourhood of $1_G$ in $G$
and $\mathcal W$ an open neighbourhood of $O$ in $\mathfrak g$
such that $\exp \, \colon \mathcal W \to \mathcal V$ is a diffeomorphism.
Let $a \in X$.
\begin{enumerate}[label=(\roman*)]
\item\label{iDEconnectivity+}
The normal closed subgroup $\ker (\varepsilon_a)$ of $M_0 (G)$ is connected.
\end{enumerate}
Let $\gamma \in N_{X \smallsetminus a}$
\begin{enumerate}[label=(\roman*)]
\addtocounter{enumi}{1}
\item\label{iiDEconnectivity+}
There exist an open neighbourhood $\mathcal U$ of $a$ in $X$
and a homotopy $(\gamma_t)_{0, \le t \le 1}$ in $M_0 (G)$
from $\gamma = \gamma_0$
to the constant map $\gamma_1 \, \colon X \to \{1_G\}$
such that $\gamma_t(x) = 1_G$
for all $t \in \mathopen[ 0 , 1 \mathclose]$ and all $x \in \mathcal U$.
\item\label{iiiDEconnectivity+}
Moreover, there exists
a sequence $\xi_1, \hdots, \xi_k \in \mathcal C^\infty(X, \mathfrak g)$
with the following properties
$$
\begin{aligned}
&\xi_i (x) \in \mathcal W
\hskip.5cm \text{for all} \hskip.2cm
i \in \{1, \hdots, k\}
\hskip.2cm \text{and} \hskip.2cm
x \in X ,
\\
&\xi_i(x) = 0 \hskip.5cm \text{for all} \hskip.2cm
i \in \{1, \hdots, k\}
\hskip.2cm \text{and} \hskip.2cm
x \in \mathcal U ,
\\
&\gamma (x) = \exp \big( \xi_1 (x) \big) \exp \big( \xi_2 (x) \big) \cdots \exp \big( \xi_k (x) \big)
\hskip.5cm \text{for all} \hskip.2cm
x \in X .
\end{aligned}
$$
\end{enumerate}
\end{lem}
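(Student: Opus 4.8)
The plan is to establish the three claims in turn, deducing~\ref{iiiDEconnectivity+} from~\ref{iiDEconnectivity+}, which is the substantial one. For~\ref{iDEconnectivity+}, given $\gamma \in \ker(\varepsilon_a)$ (so $\gamma(a) = 1_G$), I would use that $M_0 (G)$ is connected to pick a path $(g_t)_{0 \le t \le 1}$ with $g_0 = \gamma$ and $g_1 = 1_G$, and then correct it by constants: writing $\eta_h \in M_0 (G)$ for the constant map of value $h$ (an element of $M_0 (G)$ because $G$ is connected), set $\tilde g_t = g_t\, \eta_{g_t(a)}^{-1}$, so that $\tilde g_t(x) = g_t(x)\, g_t(a)^{-1}$. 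Then $\tilde g_t(a) = 1_G$ for every $t$, hence $(\tilde g_t)$ is a path inside $\ker(\varepsilon_a)$ joining $\gamma$ to $1_G$; thus $\ker(\varepsilon_a)$ is path-connected.

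For~\ref{iiDEconnectivity+} I would again start from a path $(g_t)$ in $M_0 (G)$, now with $g_0 = 1_G$ and $g_1 = \gamma$, corrected by constants exactly as above so that $g_t(a) = 1_G$ for all $t$. Viewing $g$ as a continuous map $[0,1] \times X \to G$ with $g([0,1] \times \{a\}) = \{1_G\} \subset \mathcal V$ and invoking compactness of $[0,1]$ (tube lemma), I choose a chart $\phi \colon \mathcal U_1 \to B(0,2) \subset \R^n$ with $\phi(a) = 0$, small enough that $g_t(x) \in \mathcal V$ for all $t$ and all $x \in \mathcal U_1$; shrinking $\mathcal U_1$ further I may also assume $\gamma \equiv 1_G$ on $\mathcal U_1$, since $a \notin \supp(\gamma)$. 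On $\mathcal U_1$ put $\zeta_t = \log \circ g_t$, so $\zeta_0 = 0$ and $\zeta_1 = \log \circ \gamma = 0$. Fix a smooth $\beta \colon X \to [0,1]$ with $\beta \equiv 0$ on $\mathcal U := \phi^{-1}(B(0,1))$ and $\beta \equiv 1$ on $X \smallsetminus \phi^{-1}(B(0,3/2))$, and define
$$
\tilde g_t(x) \, = \,
\begin{cases}
\exp\big( \beta(x)\, \zeta_t(x) \big) & \text{if } x \in \mathcal U_1 ,
\\
g_t(x) & \text{if } x \in X \smallsetminus \phi^{-1}(\overline{B(0,3/2)}) .
\end{cases}
$$
The two formulas agree on the overlap $\phi^{-1}(B(0,2) \smallsetminus \overline{B(0,3/2)})$, where $\beta \equiv 1$ forces $\exp(\beta\zeta_t) = \exp(\zeta_t) = g_t$, so $\tilde g_t$ is a well-defined smooth map $X \to G$. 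By construction $\tilde g_t \equiv 1_G$ on $\mathcal U$, while $\tilde g_0 = 1_G$ and $\tilde g_1 = \gamma$ (using $\zeta_0 = \zeta_1 = 0$ on $\mathcal U_1$ and $\gamma \equiv 1_G$ there); reversing the parameter gives the homotopy required in~\ref{iiDEconnectivity+}. The hard part is exactly this construction: one must cut off the logarithm $\zeta_t$ rather than $g_t$ itself, and it is the two normalisations $g_t(a) = 1_G$ (so that $\log \circ g_t$ is defined near $a$, uniformly in $t$) together with $\gamma \equiv 1_G$ near $a$ (so that the endpoint $\tilde g_1$ is still $\gamma$) that make the endpoints come out correctly.

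Finally,~\ref{iiiDEconnectivity+} follows by subdividing the homotopy $(\gamma_t)$ just obtained, with $\gamma_0 = \gamma$, $\gamma_1 = 1_G$, and $\gamma_t \equiv 1_G$ on $\mathcal U$ for all $t$. Since $\EXP$ is a local diffeomorphism (see the proof of Lemma~\ref{partunity}), the image $\EXP(\{\zeta \mid \zeta(X) \subset \mathcal W\})$ is a neighbourhood of $1_G$ in $M_0 (G)$, and an element $\EXP(\zeta)$ of it is trivial on $\mathcal U$ precisely when $\zeta$ vanishes on $\mathcal U$, because $\exp$ is injective on $\mathcal W$. As $t \mapsto \gamma_t$ is continuous and $[0,1]$ is compact, I choose $0 = t_0 < t_1 < \cdots < t_k = 1$ fine enough that each ratio $\gamma_{t_{j-1}}\, \gamma_{t_j}^{-1}$ lies in this neighbourhood; being a ratio of maps trivial on $\mathcal U$, it is itself trivial on $\mathcal U$, hence equals $\EXP(\xi_j)$ for some $\xi_j \in \mathcal C^\infty(X, \mathfrak g)$ with $\xi_j(X) \subset \mathcal W$ and $\xi_j|_{\mathcal U} = 0$. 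The telescoping product $\prod_{j=1}^k \gamma_{t_{j-1}}\gamma_{t_j}^{-1}$ equals $\gamma_0\gamma_k^{-1} = \gamma$, which is the desired factorisation $\gamma(x) = \exp(\xi_1(x)) \cdots \exp(\xi_k(x))$ with the stated properties.
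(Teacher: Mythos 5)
Your proof is correct, and its skeleton matches the paper's: obtain a homotopy with smooth time-slices from connectedness of $M_0 (G)$, normalise it by right multiplication by the constants $g_t(a)^{-1}$ so that it stays in $\ker (\varepsilon_a)$ --- for \ref{iDEconnectivity+} this is essentially the paper's argument --- and deduce \ref{iiiDEconnectivity+} from \ref{iiDEconnectivity+} by uniform continuity, subdivision, and taking logarithms of consecutive ratios, again as in the paper. Where you genuinely diverge is the localisation step \ref{iiDEconnectivity+}. The paper works at the source: it takes a ball chart $\mathcal U' \ni a$ on which $\gamma \equiv 1_G$, builds a smooth map $h \colon X \to X$ collapsing a smaller ball $\mathcal U$ onto the point $a$ and equal to the identity outside $\mathcal U'$ (so that $\gamma \circ h = \gamma$), and replaces the normalised homotopy $\Delta'_\gamma (x,t)$ by $\Delta'_\gamma (h(x),t)$; since $\Delta'_\gamma (a, \cdot) \equiv 1_G$, the reparametrised homotopy is identically $1_G$ on all of $\mathcal U$. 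You work at the target: the tube lemma forces $g_t (\mathcal U_1) \subset \mathcal V$ uniformly in $t$, and you then glue $\exp \big( \beta \log g_t \big)$ with $g_t$ via a bump function $\beta$. Both are valid. The paper's precomposition trick is slightly more economical: it uses only the one-point normalisation $\Delta'_\gamma (a,t) = 1_G$, with no tube lemma, no chart gluing, and no need for the homotopy to take values in a logarithm chart near $a$; your cutoff argument pays for those (routine) verifications but has the advantage that logarithms are already set up when you reach \ref{iiiDEconnectivity+}. One small point in your favour: your telescoping product $\prod_{j} \gamma_{t_{j-1}} \gamma_{t_j}^{-1} = \gamma_{t_0} \gamma_{t_k}^{-1} = \gamma$ is ordered correctly, whereas the paper's factors $\gamma_i = \Delta_\gamma (\cdot, t_{i-1})^{-1} \Delta_\gamma (\cdot, t_i)$ actually telescope to $\gamma^{-1}$ rather than $\gamma$; that is a harmless slip in the paper (apply the argument to $\gamma^{-1} \in N_{X \smallsetminus a}$, or swap the two factors), but your version needs no such repair.
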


\begin{proof}
\ref{iDEconnectivity+}
Let $\gamma \in M_0 (G)$.
\par

Since $M_0 (G)$ is by definition a connected group,
there exists a continuous map
$$
\Gamma_\gamma \, \colon X \times \mathopen[ 0, 1 \mathclose] \to G
$$
such that 
$$
\begin{aligned}
(11) \hskip.5cm
& \Gamma_\gamma ( \cdot , t ) \, \colon X \to G
\hskip.5cm \text{is smooth for all} \hskip.2cm
t \in \mathopen[ 0, 1 \mathclose] ,
\\
(12) \hskip.5cm
& \Gamma_\gamma ( x , 0 ) = \gamma (x)
\hskip.5cm \text{for all} \hskip.2cm
x \in X ,
\\
(13) \hskip.5cm
& \Gamma_\gamma ( x , 1 ) = \gamma(a)
\hskip.5cm \text{for all} \hskip.5cm
x \in X .
\end{aligned}
$$
When $\gamma \in \ker (\varepsilon_a)$,
note that $\Gamma_\gamma (\cdot, t)$
need not be in $\ker (\varepsilon_a)$ for $t \in \mathopen] 0, 1 \mathclose[$,
so that Claim~\ref{iDEconnectivity+} is not yet proved.
\par

Define
$$
\Delta'_\gamma \, \colon X \times \mathopen[ 0, 1 \mathclose] \to G
\hskip.5cm \text{by} \hskip.5cm
\Delta'_\gamma (x , t) =
\Gamma_\gamma ( x , t ) \Gamma_\gamma ( a , t )^{-1} \gamma(a)
$$
It follows from (11), (12), and (13), that
$$
\begin{aligned}
(21) \hskip.5cm
& \Delta'_\gamma ( \cdot , t ) \, \colon X \to G
\hskip.5cm \text{is smooth for all} \hskip.2cm
t \in \mathopen[ 0, 1 \mathclose] ,
\\
(22) \hskip.5cm
& \Delta'_\gamma ( x , 0 ) = \gamma (x) \gamma (a)^{-1} \gamma (a) = \gamma (x)
\hskip.5cm \text{for all} \hskip.2cm
x \in X ,
\\
(23) \hskip.5cm
& \Delta'_\gamma ( x , 1 ) = \gamma (a) \gamma (a)^{-1} \gamma (a) = \gamma (a)
\hskip.5cm \text{for all} \hskip.2cm
x \in X ,
\\
(24) \hskip.5cm
& \Delta'_\gamma ( a, t )
= \Gamma_\gamma (a,t) \Gamma_\gamma(a, t)^{-1} \gamma (a) = \gamma (a)
\hskip.5cm \text{for all} \hskip.2cm
t \in \mathopen[ 0, 1 \mathclose] .
\end{aligned}
$$
\par

Suppose now that $\gamma \in \ker (\varepsilon_a)$.
Then $(x \mapsto \Delta'_\gamma( x , t) )_{t \in \mathopen[ 0, 1 \mathclose]}$
is a continuous family of maps connecting $\gamma$
to the constant map $X \to \{ \gamma (a) \} = \{ 1_G \}$
in the group $M_0 (G)$, indeed in the group $\ker (\varepsilon_a)$ by (24).
This proves Claim~\ref{iDEconnectivity+}.

\vskip.2cm

\ref{iiDEconnectivity+}
Let $\mathcal U'$ be an open neighbourhood of $a$ in $X$
such that $\gamma (x) = \gamma (a)$ for all $x \in \mathcal U'$.
Upon replacing $\mathcal U'$ by a smaller neighbourhood,
we can assume that $\mathcal U'$ is diffeomorphic
to an open ball and that $\mathcal U'$ has spherical coordinates
$(r, \sigma) \in \mathopen[ 0, 1 \mathclose[ \times S^{n-1}$;
set $\mathcal U = \{ (r, \sigma) \in \mathcal U' \mid r < 1/3 \}$.
\par

Let $\lambda \, \colon \mathopen[ 0, 1 \mathclose[ \to \mathopen[ 0, 1 \mathclose[$
be a smooth function such that
$\lambda (t) = 0$ for $t \le 1/3$ and $\lambda (t) = 1$ for $t \ge 2/3$.
Define a smooth map $h \, \colon X \to X$ by
$h(x) = h( \lambda(r), \sigma)$ for all $x = (r, \sigma) \in \mathcal U'$
and $h(x) = x$ for all $x \in X \smallsetminus \mathcal U'$. 
Then
$$
\begin{aligned}
& (31) \hskip.5cm
h (x) = x
\hskip.5cm \text{for all} \hskip.2cm
x \in X \smallsetminus \mathcal U' ,
\\
& (32) \hskip.5cm
h (\mathcal U') \subset \mathcal U',
\\
& (33) \hskip.5cm
h(x) = a
\hskip.5cm \text{for all} \hskip.2cm
x \in \mathcal U .
\\
& (34) \hskip.5cm
\gamma \circ h = \gamma .
\end{aligned}
$$
[Check of (34):
if $x \in X \smallsetminus \mathcal U'$ we have $\gamma (h(x)) = \gamma(x)$
by (31)
and if $x \in \mathcal U'$ we have $\gamma (h(x)) = \gamma (a) = \gamma (x)$
by (33).]
\par

Let $\Delta'_\gamma$ be as in~\ref{iDEconnectivity+}. Define
$$
\Delta_\gamma \, \colon X \times \mathopen[ 0, 1 \mathclose] \to G
\hskip.5cm \text{by} \hskip.5cm
\Delta_\gamma (x , t) = \Delta'_\gamma( h(x) , t ) .
$$
We have
$$
\begin{aligned}
(41) \hskip.5cm
& \Delta_\gamma ( \cdot , t ) \, \colon X \to G
\hskip.5cm \text{is smooth for all} \hskip.2cm
t \in \mathopen[ 0, 1 \mathclose] ,
\\
(42) \hskip.5cm
& \Delta_\gamma ( x , 0 ) = \gamma ( h(x) ) = \gamma (x)
\hskip.5cm \text{for all} \hskip.2cm
x \in X ,
\hskip.2cm \text{by (22) and (34)},
\\
(43) \hskip.5cm
& \Delta_\gamma ( x , 1 ) = \gamma (a) = 1_G
\hskip.5cm \text{for all} \hskip.2cm
x \in X ,
\hskip.2cm \text{by (23)},
\\
(44) \hskip.5cm
& \Delta_\gamma ( a, t )
= \gamma (a) = 1_G
\hskip.5cm \text{for all} \hskip.2cm
t \in \mathopen[ 0, 1 \mathclose] ,
\hskip.2cm \text{by (33)} \hskip.2cm \text{and} \hskip.2cm (24) .
\end{aligned}
$$
With $\gamma_t = \Delta_\gamma(\cdot , t)$, this shows Claim~\ref{iiDEconnectivity+}.

\vskip.2cm

\ref{iiiDEconnectivity+}
By uniform continuity of $\Delta_\gamma$,
for all $t', t'' \in \mathopen[ 0, 1 \mathclose]$
such that $t' < t''$ and $t'' - t'$ is small enough, we have
$$
\Delta_\gamma ( x , t' )^{-1} \Delta_\gamma( x , t'') \in \mathcal V
\hskip.5cm \text{for all} \hskip.2cm
x \in X .
$$
We can therefore find a sequence $t_0, t_1, \hdots, t_k \in \mathopen[ 0, 1 \mathclose]$
such that
$$
\begin{aligned}
& 0 = t_0 < t_1 < \cdots < t_{k-1} < t_k = 1
\\
& \Delta_\gamma ( x , t_{i-1})^{-1} \Delta_\gamma( x , t_i) \in \mathcal V
\hskip.5cm \text{for all} \hskip.2cm
x \in X \hskip.2cm \text{and} \hskip.2cm i \in \{1, \hdots, k\} .
\end{aligned}
$$
Let $\log \, \colon \mathcal V \to \mathcal W$ denote the inverse
of the diffeomorphism $\exp \, \colon \mathcal W \to \mathcal V$.
For $i \in \{1, \hdots, k \}$, define
$$
\begin{aligned}
&\gamma_i (x) = \Delta_\gamma ( x , t_{i-1})^{-1} \Delta_\gamma( x , t_i)
\\
& \xi_i (x) = \log ( \gamma_i(x) ) 
\end{aligned}
$$
for all $x \in X$. Then
$$
\gamma (x) = \exp \big( \xi_1 (x) \big) \exp \big( \xi_2 (x) \big) \cdots \exp \big( \xi_k (x) \big)
\hskip.5cm \text{for all} \hskip.2cm
x \in X
$$
and $\xi_1, \hdots, \xi_k$ have the properties stated in \ref{iiiDEconnectivity+}.
\end{proof}

Whenever useful below, we consider the Lie algebra $\mathfrak g$
furnished with a scalar product,
for example that given by minus the Killing form,
so that the notion of ball makes sense in $\mathfrak g$.
\par

The next lemma shows that elements in $M (G)$
which are constant outside a small open subset of $X$
are in $M_0 (G)$.
More precisely:

\begin{lem}
% 20
% ??
\label{MandMo}
Let $a \in X$ and $g \in G$.
Let $\mathcal U$ be an open neighbourhood of $a$ in $X$.
Let $\mathcal V$ be an open neighbourhood of $g$ in $G$
and let $\mathcal W$ be an open ball centred at the origin $O$ in $\mathfrak g$
such that the map
$
\varphi \, \colon \mathcal W \to \mathcal V , \hskip.2cm
\xi \mapsto g \exp (\xi)
$
is a diffeomorphism.
Let $\delta \in M (G)$ be an element such that
$$
\begin{aligned}
&\delta (a) = g, \hskip.5cm \delta ( \mathcal U ) \, \in \, \mathcal V ,
\\
&\delta (x) \, = \, g
\hskip.2cm \text{for all} \hskip.2cm
x
\hskip.2cm \text{in some neighbourhood of} \hskip.2cm
X \smallsetminus \mathcal U .
\end{aligned}
$$
\par

Then $\delta \in M_0 (G)$.
\end{lem}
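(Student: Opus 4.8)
The plan is to exhibit an explicit continuous path in $M(G)$ joining $\delta$ to the constant map $1_G$, thereby placing $\delta$ in the identity component $M_0(G)$. First I would observe that $\delta$ takes \emph{all} its values in $\mathcal V$. Writing $\mathcal N$ for the open neighbourhood of $X \smallsetminus \mathcal U$ on which $\delta$ is constantly equal to $g$, we have $X = \mathcal U \cup \mathcal N$ (since $X \smallsetminus \mathcal U \subset \mathcal N$); on $\mathcal U$ one has $\delta(\mathcal U) \subset \mathcal V$ by hypothesis, and on $\mathcal N$ one has $\delta \equiv g = \varphi(O) \in \mathcal V$. Hence $\delta(X) \subset \mathcal V$, and the formula $\xi := \varphi^{-1} \circ \delta$ defines a smooth map $\xi \, \colon X \to \mathcal W$ satisfying $\delta(x) = g \exp(\xi(x))$ for all $x \in X$.

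Next I would use that $\mathcal W$ is a ball centred at $O$, hence convex, so that $t\, \xi(x) \in \mathcal W$ for all $t \in \mathopen[0,1\mathclose]$ and all $x \in X$. Define
$$
\delta_t(x) = g \exp\big( t\, \xi(x) \big) = \varphi\big( t\, \xi(x) \big)
\hskip.5cm \text{for} \hskip.2cm t \in \mathopen[0,1\mathclose], \ x \in X .
$$
Each $\delta_t$ is a smooth map $X \to \mathcal V \subset G$, with $\delta_1 = \delta$ and $\delta_0$ the constant map of value $g$; since $(t,x) \mapsto g\exp(t\, \xi(x))$ is jointly smooth, $t \mapsto \delta_t$ is a continuous path in $M(G)$ for the $\mathcal C^\infty$-compact open topology. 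This shows that $\delta$ lies in the same connected component of $M(G)$ as the constant map of value $g$. Finally, since $G$ is connected, a path from $1_G$ to $g$ in $G$ induces a path of constant maps in $M(G)$ from $1_G$ to the constant map of value $g$; concatenating the two paths joins $\delta$ to $1_G$, whence $\delta \in M_0(G)$.

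There is no serious obstacle here: the content is entirely in the first step, where one must check that the hypotheses force $\delta(X) \subset \mathcal V$ so that $\xi = \varphi^{-1}\circ\delta$ is globally defined and smooth. This is exactly where the two conditions $\delta(\mathcal U) \subset \mathcal V$ and $\delta \equiv g$ near $X \smallsetminus \mathcal U$, together with $g = \varphi(O) \in \mathcal V$, are used. Once this is granted, convexity of the ball $\mathcal W$ makes the straight-line homotopy $t \mapsto t\, \xi$ stay in $\mathcal W$, and the diffeomorphism $\varphi$ transports it to the desired path in $M(G)$; the only routine point to record is that joint smoothness of the exponential makes this path continuous for the relevant topology.
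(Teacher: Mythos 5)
Your proof is correct and follows essentially the same route as the paper's: both contract $\varphi^{-1}\circ\delta$ radially to the origin inside the ball $\mathcal W$, transport the contraction by $\varphi$ to get a continuous path in $M(G)$ from $\delta$ to the constant map of value $g$, and then invoke connectedness of $G$. The only (cosmetic) difference is that you first note $\delta(X)\subset\mathcal V$ and define the homotopy by one global formula with the straight-line scaling $t\,\xi$, whereas the paper defines it piecewise on $\mathcal U$ and its complement using a cutoff reparametrization $\lambda(t)$ of the radial coordinate.
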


\begin{proof}
Let $\lambda \, \colon \mathopen[ 0, 1 \mathclose] \to \mathopen[ 0, 1 \mathclose]$
be a smooth function such that 
$\lambda (t) = 1$ for $t \le 1/3$ and $\lambda (t) = 0$ for $t \ge 2/3$.
Consider $\mathcal W$ with polar coordinates $(r, \sigma)$,
where ${r \in \mathopen[ 0, 1 \mathclose[}$ and $\sigma$ in the unit sphere of $\mathfrak g$.
Define a continuous map
$$
h \, \colon \mathcal W \times \mathopen[ 0, 1 \mathclose] \to \mathcal W ,
\hskip.2cm
(r, \sigma, t) \mapsto (\lambda (t) r, \sigma)
$$
for all $(r, \sigma) \in \mathcal W$ and $t \in \mathopen[ 0, 1 \mathclose]$.
Then:
$$
\begin{aligned}
&\mathcal W \to \mathcal W, \hskip.1cm \xi \mapsto h(\xi, t) ,
\hskip.2cm \text{is smooth for all} \hskip.2cm
t \in \mathopen[ 0, 1 \mathclose] ,
\\
&h(\xi, 0) = \xi
\hskip.2cm \text{for all} \hskip.2cm
\xi \in \mathcal W ,
\\
&h(\xi, 1) = O
\hskip.2cm \text{for all} \hskip.2cm
\xi \in \mathcal W ,
\\
&h(O, t) = O
\hskip.2cm \text{for all} \hskip.2cm
t \in \mathopen[ 0, 1 \mathclose] .
\end{aligned}
$$
For $t \in \mathopen[ 0, 1 \mathclose]$, define $\delta_t \, \colon X \to G$ by
$$
\begin{aligned}
&\delta_t (x) = g \exp \Big( h \left( \varphi^{-1} (\delta(x)), t \right) \Big)
\hskip.2cm \text{for all} \hskip.2cm
x \in \mathcal U ,
\\
&\delta_t (x) = g
\hskip.2cm \text{for all} \hskip.2cm
x \in X \smallsetminus \mathcal U .
\end{aligned}
$$
Then $(t \mapsto \delta_t)_{t \in \mathopen[ 0, 1 \mathclose]}$
is a continuous family in $M (G)$ of maps connecting $\delta_0 = \delta$
to the constant map $\delta_1$ of value $g$.
It follows that $\delta$ and $\delta_1$ are in the same connected component of $M (G)$.
Since $G$ is connected,
the constant map $\delta_1$ is in $M_0 (G)$,
hence $\delta \in M_0 (G)$.
\end{proof}

For a point $a \in X$, denote by $\mathcal C^\infty_{a, G}$
the group of germs at $a$ of smooth maps from $X$ to $G$.
Recall that $\mathcal C^\infty_{n, G}$ denotes
the group of germs at the origin of smooth maps from $\R^n$ to $G$.

\begin{lem}
% 21
% end of Section 3 in [Harp--88]
\label{germ=germ}
Let $a \in X$.
\begin{enumerate}[label=(\roman*)]
\item\label{iDEgerm=germ}
The groups $\mathcal C^\infty_{a, G}$ and $\mathcal C^\infty_{n, G}$ are isomorphic.
\item\label{iiDEgerm=germ}
The quotient group $M_0 (G) / N_{X \smallsetminus a}$ is isomorphic
to the group of germs $\mathcal C^\infty_{a, G}$, and therefore also to $C^\infty_{n, G}$.
\end{enumerate}
\end{lem}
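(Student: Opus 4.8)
The plan is to realize both isomorphisms through the natural ``germ at $a$'' operation, with part~\ref{iDEgerm=germ} reducing to the choice of a chart and part~\ref{iiDEgerm=germ} being an application of the first isomorphism theorem whose only non-formal ingredient is a surjectivity statement supplied by Lemma~\ref{MandMo}.

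For part~\ref{iDEgerm=germ}, I would fix a smooth chart $\phi$ carrying an open neighbourhood of $a$ in $X$ diffeomorphically onto an open neighbourhood of the origin in $\R^n$, with $\phi(a) = O$. Precomposition $\gamma \mapsto \gamma \circ \phi^{-1}$ sends a smooth map defined near $a$ to a smooth map defined near $O$, and it descends to germs, since shrinking the domain does not alter the germ. The resulting map $\mathcal C^\infty_{a,G} \to \mathcal C^\infty_{n,G}$ is bijective, with inverse induced by precomposition with $\phi$, and it is a homomorphism because the group law on both sides is pointwise multiplication in $G$, which commutes with precomposition: $(\gamma\delta)\circ\phi^{-1} = (\gamma\circ\phi^{-1})(\delta\circ\phi^{-1})$. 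This settles \ref{iDEgerm=germ}.

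For part~\ref{iiDEgerm=germ}, I would introduce the germ homomorphism
$$
\rho \, \colon M_0(G) \to \mathcal C^\infty_{a,G} ,
$$
sending $\gamma$ to its germ at $a$; it is a homomorphism for the same pointwise reason as above. Its kernel consists of those $\gamma$ whose germ at $a$ is trivial, i.e.\ those $\gamma$ equal to $1_G$ on some neighbourhood of $a$; equivalently $a \notin \supp(\gamma)$, that is $\ker\rho = N_{X \smallsetminus a}$. The first isomorphism theorem then gives $M_0(G)/N_{X\smallsetminus a} \cong {\rm Im}(\rho)$, and combining this with \ref{iDEgerm=germ} reduces the whole statement to the surjectivity of $\rho$.

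Surjectivity is the one point that is not purely formal, and this is exactly where I would invoke Lemma~\ref{MandMo}. Given a germ at $a$, represent it by a smooth map $\beta$ defined on a neighbourhood of $a$ and set $g = \beta(a)$. Shrinking the neighbourhood, I may assume that $\beta$ takes values in a neighbourhood $\mathcal V$ of $g$ of the form $\{g\exp(\xi) \mid \xi \in \mathcal W\}$, with $\mathcal W$ an open ball centred at the origin of $\mathfrak g$, so that $\beta(x) = g\exp(\xi(x))$ with $\xi$ smooth and $\xi(a)=O$. Choosing a bump function $\chi$ equal to $1$ near $a$ and with support in an open set $\mathcal U$ contained in this neighbourhood, I would define $\gamma$ globally by $\gamma(x) = g\exp(\chi(x)\xi(x))$ where $\xi$ is defined and $\gamma(x) = g$ elsewhere; the two prescriptions agree where $\chi = 0$, so $\gamma$ is smooth. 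Since $\mathcal W$ is a ball, $\chi(x)\xi(x) \in \mathcal W$, whence $\gamma(\mathcal U) \subset \mathcal V$; moreover $\gamma$ agrees with $\beta$ near $a$, hence has the prescribed germ, and $\gamma$ is constant equal to $g$ on the open set $X \smallsetminus \supp(\chi)$, a neighbourhood of $X \smallsetminus \mathcal U$. Lemma~\ref{MandMo} then guarantees $\gamma \in M_0(G)$, so $\rho(\gamma)$ is the given germ. The main, and really the only, obstacle is thus the passage from a purely local datum to a global element of the \emph{identity component} $M_0(G)$, which is precisely the content of Lemma~\ref{MandMo}; everything else is the first isomorphism theorem together with the functoriality of germs under precomposition.
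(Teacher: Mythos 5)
Your proposal is correct and follows essentially the same route as the paper: part~\ref{iDEgerm=germ} via a chart at $a$, and part~\ref{iiDEgerm=germ} via the germ homomorphism $\rho_a$, the identification $\ker(\rho_a) = N_{X \smallsetminus a}$, and surjectivity established by building a global map with the prescribed germ which is constant near $X \smallsetminus \mathcal U$ and then invoking Lemma~\ref{MandMo}. The only difference is in the mechanics of the cutoff: the paper composes the local representative with a radial retraction $h \colon X \to X$ supported in a ball chart around $a$, whereas you rescale the exponential coordinate $\xi$ by a bump function $\chi$ and set $\gamma = g \exp(\chi \xi)$; both constructions satisfy the hypotheses of Lemma~\ref{MandMo} equally well, so this is a cosmetic rather than structural deviation.
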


\begin{proof}
Claim~\ref{iDEgerm=germ} is straightforward;
indeed, any chart of $M$ around $a$ provides an isomorphism
of $C^\infty_{a, G}$ onto $C^\infty_{n, G}$.

\vskip.2cm

Consider the homomorphism
$$
\rho_a \, \colon M_0 (G) \to \mathcal C^\infty_{a, G}
$$
which associates to a globally defined smooth map $X \to G$
in $M_0 (X)$ its local germ at $a$.
The kernel of $\rho_a$ is $N_{X \smallsetminus a}$.
For Claim~\ref{iiDEgerm=germ}, it remains to show that
the homomorphism $\rho_a$ is surjective.
\par

Let $\underline \gamma \in \mathcal C^\infty_{a, G}$;
set $g = \underline \gamma (a)$.
Let $\mathcal U'$ be an open neighbourhood of $a$ in~$X$
such that there exists
a representative $\gamma_a \, \colon \mathcal U' \to G$ of $\underline \gamma$.
As in the proof of Lemma~\ref{connectivity+},
we can assume that $\mathcal U'$ is diffeomorphic
to an open ball and that $\mathcal U'$ has spherical coordinates
$(r, \sigma) \in \mathopen[ 0, 1 \mathclose[ \times S^{n-1}$;
set $\mathcal U = \{ (r, \sigma) \in \mathcal U' \mid r < 1/3 \}$.
We can also assume that there exist
$\mathcal V \subset G$ and $\mathcal W \in \mathfrak g$
as in Lemma~\ref{MandMo}, such that $\gamma (\mathcal U') \subset \mathcal V$.
\par

Let $\lambda \, \colon \mathopen [ 0, 1 \mathclose[ \to \mathopen [ 0, 1 \mathclose[$
be a smooth function such that $\lambda (t) = t$ for $t \le 1/3$
and $\lambda (t) = 0$ for $t \ge 2/3$.
Define a smooth map $h \, \colon X \to X$ by
$h(x) = h(\lambda (r), \sigma)$ for all $x = (r, \sigma) \in \mathcal U'$
and $h(x) = a$ for all $x \in X \smallsetminus \mathcal U'$.
Define $\delta \, \colon X \to G$ in $M (G)$ by
$\delta (x) = \gamma (h(x))$ for all $x \in X$.
Then
$$
\begin{aligned}
\delta (x) &= \gamma (x)
\hskip.5cm \text{for all} \hskip.2cm
x \in \mathcal U
\hskip.5cm \text{and therefore} \hskip.5cm
\underline \delta = \underline \gamma ,
\\
\delta (x) &= a
\hskip.5cm \text{for all} \hskip.2cm
x \in \mathcal U' ,
\end{aligned}
$$
and $\delta \in M_0 (G)$
by Lemma~\ref{MandMo}.
This concludes the proof of the surjectivity of~$\rho_a$.
\end{proof}

\begin{lem}
% 22
% 14
\label{Zeproblem}
Let $N$ be a maximal normal subgroup in $M_0 (G)$.
\par

There exists $a \in X$ such that $N$ contains $N_{X \smallsetminus a}$.
\end{lem}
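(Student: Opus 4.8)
The plan is to run the evaluation/germ machinery point by point and then to feed the one genuinely global difficulty into the local Proposition~II. First I would use Lemma~\ref{germ=germ}: for each $a \in X$ the quotient $M_0 (G) / N_{X \smallsetminus a}$ is isomorphic to $\mathcal C^\infty_{n,G}$, the isomorphism carrying $\varepsilon_a$ to the germ evaluation $\varepsilon$. Hence the image $\rho_a(N)$ is a normal subgroup of $\mathcal C^\infty_{n,G}$, and Proposition~II gives a dichotomy: either $\rho_a(N) \subseteq N_O \mathcal C^\infty_{n,G}$, which amounts to $\varepsilon_a(N) \subseteq Z(G)$ and hence to $N \subseteq N_a M_0 (G)$, or $\rho_a(N) = \mathcal C^\infty_{n,G}$, which amounts to $N N_{X \smallsetminus a} = M_0 (G)$. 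In the first situation, for that $a$, maximality of $N$ and properness of $N_a M_0 (G)$ force $N = N_a M_0 (G)$, which contains $N_{X \smallsetminus a}$; this already proves the lemma. So the real problem is to rule out, for a proper $N$, the case where the second alternative holds at \emph{every} point.

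Thus I would assume $\varepsilon_a(N) = G$ for all $a$ and aim to derive $N = M_0 (G)$, contradicting maximality. Set $Q = M_0 (G)/N$. By Lemma~\ref{perfection} the group $M_0 (G)$ is perfect, so $Q$ is perfect; being a quotient by a maximal normal subgroup it is simple, hence $Q$ is nonabelian simple unless trivial. Next I would use a support identity: if $\mu$ is trivial near $a$ and $\nu$ near $b$, then $[\mu,\nu]$ is trivial near both, so $[N_{X \smallsetminus a}, N_{X \smallsetminus b}] \subseteq N_{X \smallsetminus \{a,b\}}$, and likewise for finite unions. Since each $N_{X \smallsetminus a}$ surjects onto $Q$ (by $N N_{X \smallsetminus a} = M_0 (G)$) and $[Q,Q] = Q$, an induction on $|F|$ gives $N N_{X \smallsetminus F} = M_0 (G)$ for every finite $F \subset X$: every element of $M_0 (G)$ agrees with an element of $N$ near $F$.

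To promote this pointwise agreement to honest membership I would reduce to small balls. Using Lemma~\ref{partunity} together with the pointwise-commuting factorisation $\EXP(\xi) = \prod_i \EXP(\lambda_i \xi)$ from a partition of unity (and Lemma~\ref{connectivity+} to split elements into small near-identity pieces), it suffices to show that $N$ contains $N_{\mathcal U} = \{ \gamma \mid \supp(\gamma) \subseteq \mathcal U \}$ for each small coordinate ball $\mathcal U$. Here the key device is a commutator localisation: for $\nu \in N$ and $\tau \in N_{\mathcal U}$ one has $[\nu,\tau] \in N$ and $\supp [\nu,\tau] \subseteq \mathcal U$, so $[\nu,\tau] \in N \cap N_{\mathcal U}$, while its germ at a point $c \in \mathcal U$ is $[\,\rho_c(\nu), \rho_c(\tau)\,]$. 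Since $\rho_c(N) = \mathcal C^\infty_{n,G}$ and $\mathcal C^\infty_{n,G}$ is perfect (its commutator subgroup is normal and surjects onto the perfect group $G/Z(G)$, hence equals $\mathcal C^\infty_{n,G}$ by Proposition~II and Reminder~\ref{RemiCartanvdW}), these commutators realise every germ at every $c \in \mathcal U$. Thus $N \cap N_{\mathcal U}$ is a normal subgroup of $N_{\mathcal U}$ inducing all germs at all points of $\mathcal U$; as supports of elements of $N_{\mathcal U}$ are compact in $\mathcal U \cong \R^n$, one finishes at the level of the compactly supported gauge group.

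The main obstacle is precisely this last passage, from germ (point) data to genuinely support-localised elements, i.e.\ the compactly supported ball case. This is exactly the step mishandled in \cite{Harp--88} (the old Lemma~14, where germwise representatives were glued too cavalierly): the naive telescoping fails because multiplying by a global element of $N$ to correct one chart destroys the charts already treated, so the support grows back. The commutator localisation above is what repairs this, producing elements of $N$ whose support is already confined to $\mathcal U$, so corrections no longer leak outside, while perfectness of $\mathcal C^\infty_{n,G}$ guarantees these localised elements carry all the required germwise data. I expect the careful bookkeeping of supports in this final ball case, rather than any single clever identity, to be where the real work lies.
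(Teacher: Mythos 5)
Your opening reduction is correct and even a bit slicker than the paper's start: if $\rho_a(N) \subseteq N_O \mathcal C^\infty_{n, G}$ for some $a$, then $N \subseteq N_a M_0 (G)$, and maximality forces $N = N_a M_0 (G) \supseteq N_{X \smallsetminus a}$, so the lemma holds. Your auxiliary facts are also correct: the disjoint-support inclusion $[N_{X \smallsetminus a}, N_{X \smallsetminus b}] \subseteq N_{X \smallsetminus \{a,b\}}$, the perfectness of the quotient via Lemma~\ref{perfection}, and the perfectness of $\mathcal C^\infty_{n, G}$ via Proposition~II. But the proof has a genuine gap exactly where you flag it, and that gap is not ``careful bookkeeping'': it is the entire content of the lemma. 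From $\rho_c \bigl( N \cap N_{\mathcal U} \bigr) = \mathcal C^\infty_{c, G}$ for every $c \in \mathcal U$ one cannot conclude $N \supseteq N_{\mathcal U}$, nor extract any contradiction. Germ surjectivity at each point only lets you match a given $\gamma \in N_{\mathcal U}$ by some element of $N \cap N_{\mathcal U}$ on an uncontrollably small neighbourhood of each single point $c$; correcting $\gamma$ there destroys the matching already achieved at other points, and nothing in your commutator localisation prevents this. That device controls \emph{where} elements of $N$ are supported, but only \emph{what their germs are at isolated points}, which is too weak to glue --- it is the same regrowth-of-support problem that sank Lemma~14 of \cite{Harp--88}, merely pushed one level down.

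The paper closes this gap with two devices absent from your proposal, and it never uses germs or Proposition~II inside this lemma at all. First, it defines $Y$ as the set of points $x \in X$ admitting a neighbourhood $\mathcal U(x)$ with the \emph{full small-ball group} $N_{\mathcal U (x)}$ contained in $N$ --- a much stronger property than germ surjectivity --- and obtains the dichotomy directly from maximality: $N_{\mathcal A}$ is normal in $M_0 (G)$, so either $N_{\mathcal A} \subseteq N$ or $N_{\mathcal A}$ maps onto the simple perfect quotient $S = M_0 (G) / N$; your disjoint-support commutator argument (which is exactly the one the paper uses) then shows the second alternative can occur at no more than one point, so $Y = X \smallsetminus \{a\}$. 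Second, the gluing step uses Lemma~\ref{connectivity+}~\ref{iiiDEconnectivity+}: any $\gamma \in N_{X \smallsetminus a}$ is a finite product of maps $x \mapsto \exp (\xi_i (x))$ with each $\xi_i \in \mathcal C^\infty(X, \mathfrak g)$ vanishing on a neighbourhood of $a$. Splitting by a partition of unity, the identity $\exp (\xi_i) = \prod_j \exp (\lambda_j \xi_i)$ holds \emph{exactly}, because the factors are exponentials of pointwise proportional Lie-algebra elements and hence commute pointwise, and each factor lies in some $N_{\mathcal U (x_j)} \subseteq N$ with $x_j \in Y$. No correction terms ever appear, so no support leaks back. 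It is this Lie-algebra-level commutativity, combined with ball-level (rather than germ-level) containment built into the definition of $Y$, that repairs the 1988 error; Proposition~II enters only afterwards, when Proposition~I is deduced from this lemma via Lemma~\ref{germ=germ}.
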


\begin{proof}
\emph{Step one.}
Set $S = M_0 (G) / N$.
This quotient group $S$ is simple, because $N$ is maximal,
and perfect, by Lemma~\ref{perfection}.
For an open subset $\mathcal U$ of $X$,
recall that $N_\mathcal U$ has been defined as
$\{ \gamma \in M_0 (G) \mid \supp (\gamma) \subset \mathcal U \}$.
Define
$$
Y = \left\{
x \in X \hskip.2cm \bigg\vert \hskip.2cm
\begin{aligned}
& \text{there exists an open neighbourhood}
\\
& \mathcal U (x)
\hskip.2cm \text{of} \hskip.2cm
x
\hskip.2cm \text{in} \hskip.2cm
X
\hskip.2cm \text{such that} \hskip.2cm
N_{\mathcal U (x)} \subset N 
\end{aligned}
\right\} .
$$
It follows from the definition that $Y$ is open in $X$.
The purpose of Step one is to show that
$$
Y = X \smallsetminus a
\hskip.5cm \text{for some point} \hskip.5cm a \in X .
$$
\par

The inclusion $Y \subset X$ is strict.
Indeed, suppose by contradiction that $Y = X$.
For each $x \in X$, there exists an open neighbourhood $\mathcal U (x)$
such that $N_{\mathcal U (x)} \subset N$.
Let $\{x_1, \hdots, x_k\}$ be a subset of $X$ such that
$\bigcup_{i = 1}^k \mathcal U (x_i) = X$.
Let $\gamma$ be an arbitrary element in $M_0 (G)$.
By Lemma~\ref{partunity},
there exist a finite sequence of smooth maps $(\gamma_j)_{ 1 \le j \le \ell }$ in $M_0 (G)$
and a sequence of indices $(i(j))_{ 1 \le j \le \ell }$ in $\{ 1, \hdots, k \}$
such that $\gamma = \gamma_1 \cdots \gamma_\ell$,
and $\gamma_j \in N_{ \mathcal U ( x_{i(j)} ) }$
for all $j \in \{1, \hdots, \ell\}$.
This implies that $\gamma_j \in N$ for all $j \in \{1, \hdots, \ell\}$,
hence that $\gamma \in N$.
Since $\gamma$ is arbitrary, this shows that $N = M_0 (G)$,
but this is impossible because $N$ is maximal normal in $M_0 (G)$.
\par

There exists a single point $a \in X$
such that $Y = X \smallsetminus \{a\}$.
Indeed, suppose by contradiction
that there are two distinct points $a$ and $b$ in $X$ outside $Y$.
There exist disjoint neighbourhoods $\mathcal A$ of $a$ and $\mathcal B$ of $b$ in $X$;
observe that $N_\mathcal A$ and $N_\mathcal B$ commute.
Consider the canonical projection $\pi \, \colon M_0 (G) \to S$;
the definition of $Y$ implies that
$N_\mathcal A \not\subset N$, hence
$\pi (N_\mathcal A) = S$,
and similarly $\pi (N_\mathcal B) = S$.
Since $S$ is perfect, we have
$$
S = (S, S) = ( \pi (N_\mathcal A) , \pi (N_\mathcal B) )
= \pi ( N_\mathcal A , N_\mathcal B ) = \pi ( \{1_G\} ) = \{1_S\} ,
$$
and this is impossible because
$S$ is not the one element group.

\vskip.2cm

\emph{Step two.}
It remains to show that $N \supset N_{X \smallsetminus a} \hskip.2cm (= N_Y)$.
\par

Let $\gamma \in N_{X \smallsetminus a}$.
Let $\mathcal U \subset X$
and $\xi_1, \hdots, \xi_k \in \mathcal C^\infty(X, \mathfrak g)$ be as in
Lemma~\ref{connectivity+}~\ref{iiDEconnectivity+} and~\ref{iiiDEconnectivity+}.
\par

Let $x \in \supp (\xi_1)$.
Since $x \in X \smallsetminus \{a\}$,
there exists by Step one an open neighbourhood $\mathcal U (x)$ of $x$ in $X$
such that $N_{\mathcal U (x)} \subset N$.
Since $\supp (\xi_1)$ is compact,
there exists a finite subset $\{x_1, \hdots, x_\ell\}$ of $\supp (\xi_1)$
such that $\supp (\xi_1) \subset \bigcup_{1 \le j \le \ell} \mathcal U (x_j)$.
Let $\{ \lambda_0, \lambda_1, \hdots, \lambda_\ell \}$ be a smooth partition of unity
subordinated to $( X \smallsetminus \supp (\xi_1), \mathcal U (x_1), \hdots, \mathcal U (x_\ell) \}$.
For $j \in \{1, \hdots, \ell\}$,
the $\lambda_j \xi_1$~'s commute with each other,
and $\lambda_0 \xi_1 = 0$, therefore we have
$$
\exp ( \xi_1 (x) ) = \prod_{j = 1}^\ell \exp ( \lambda_j (x) \xi_1 (x) )
\hskip.5cm \text{for all} \hskip.2cm
x \in X .
$$
By definition of the $\mathcal U (x_j)$~'s,
the maps $x \mapsto \exp ( \lambda_j(x) \xi_1 (x) )$
are in $N_{\mathcal U (x_j)}$, and therefore in $N$,
hence the map $x \mapsto \exp ( \xi_1(x) )$ is in $N$.
\par

Similarly, $x \mapsto \exp ( \xi_i (x) )$ is in $N$ for all $i \in \{1, \hdots, k \}$.
It follows that their product, $\gamma$, is also in $N$,
hence that $N_{X \smallsetminus a} \subset N$.
\end{proof}

\begin{proof}[\textbf{End of proof of Proposition~I}]
Let $N$ be a maximal normal subgroup in $M_0 (G)$.
By Lemma~\ref{Zeproblem},
there exists $a \in X$ such that $N$ contains $N_{X \smallsetminus a}$.
Let $\rho_a \, \colon M_0 (G) \to \mathcal C^\infty_{n, G}$
be the epimorphism of Lemma~\ref{germ=germ}, of kernel $ N_{X \smallsetminus a}$.
Then $\rho_a (N)$ is a maximal normal subgroup of $\mathcal C^\infty_{n, G}$,
hence $\rho_a (N) = N_O \mathcal C^\infty_{n, G}$
by Proposition~II,
hence $N = N_a M_0 (G)$.
\end{proof}

\end{document}